\renewcommand\section{\@startsection {section}{1}{\z@}
{-30pt \@plus -1ex \@minus -.2ex}
{2.3ex \@plus.2ex}
{\normalfont\normalsize\bfseries}}
\renewcommand\subsection{\@startsection{subsection}{2}{\z@}
{-3.25ex\@plus -1ex \@minus -.2ex}
{1.5ex \@plus .2ex}
{\normalfont\normalsize\bfseries}}
\renewcommand{\@seccntformat}[1]{\csname the#1\endcsname. }
\newtheorem{theorem}{Theorem}[section]
\newtheorem{lemma}[theorem]{Lemma}
\newtheorem{corollary}[theorem]{Corollary}
\newtheorem{proposition}[theorem]{Proposition}
\newtheorem{property}[theorem]{Property}
\theoremstyle{definition}
\newtheorem{note}[theorem]{Note}
\theoremstyle{remark}
\begin{document}

\title{Filter integrals for  orthogonal polynomials}

\author[T. Amdeberhan et al.]{Tewodros  Amdeberhan}
\address{Department of Mathematics, Tulane University, New Orleans, LA 70118}
\email{tamdeber@tulane.edu}

\author[]{Adriana Duncan}
\address{Department of Mathematics, Tulane University, New Orleans, LA 70118}
\email{aduncan3@tulane.edu}

\author[]{Victor H. Moll}
\address{Department of Mathematics, Tulane University, New Orleans, LA 70118}
\email{vhm@tulane.edu}

\author[]{Vaishavi Sharma}
\address{Department of Mathematics, Tulane University, New Orleans, LA 70118}
\email{vsharma1@tulane.edu}

\subjclass[2010]{Primary 33C45, Secondary 33E20}

\date{\today}

\keywords{Integrals, Legendre polynomials, Hermite polynomials, Chebyshev polynomials, Laguerre polynomials, Gegenbauer polynomials.}

\begin{abstract}
Motivated by an expression by Persson and Strang on an integral involving  Legendre polynomials, stating that 
the square of $P_{2n+1}(x)/x$ integrated over $[-1,1]$ is always $2$, we present 
analog results for  Hermite, Chebyshev, Laguerre and 
Gegenbauer polynomials as well as the original Legendre polynomial with even index. 
\end{abstract}

\maketitle

\newcommand{\ba}{\begin{eqnarray}}
\newcommand{\ea}{\end{eqnarray}}
\newcommand{\ift}{\int_{0}^{\infty}}
\newcommand{\nn}{\nonumber}
\newcommand{\no}{\noindent}
\newcommand{\lf}{\left\lfloor}
\newcommand{\rf}{\right\rfloor}
\newcommand{\realpart}{\mathop{\rm Re}\nolimits}
\newcommand{\imagpart}{\mathop{\rm Im}\nolimits}
\newcommand{\R}{X}
\newcommand{\co}{}

\newcommand{\op}[1]{\ensuremath{\operatorname{#1}}}
\newcommand{\pFq}[5]{\ensuremath{{}_{#1}F_{#2} \left( \genfrac{}{}{0pt}{}{#3}
{#4} \bigg| {#5} \right)}}

\newtheorem{Definition}{\bf Definition}[section]
\newtheorem{Thm}[Definition]{\bf Theorem}
\newtheorem{Example}[Definition]{\bf Example}
\newtheorem{Lem}[Definition]{\bf Lemma}
\newtheorem{Cor}[Definition]{\bf Corollary}
\newtheorem{Prop}[Definition]{\bf Proposition}
\numberwithin{equation}{section}

\section{Introduction}
\label{sec-intro}

Persson and Strang \cite{persson-2003a} presented a family of filters, symmetric of even degree $n$,  obtained by 
sampling a multiple of the polynomial
function $x^{-1}P_{n+1}(x)$, where $P_{n}(x)$ is the Legendre polynomial.  In their analysis of the error due to noise, they 
proved Theorem \ref{thm-int1} below. 
The authors commented that this result was not expected to them. 

\begin{theorem}
\label{thm-int1}
For all $n \in \mathbb{N}$, 
\begin{equation}
\label{legend-odd}
\int_{-1}^{1} \left( \frac{P_{2n+1}(x)}{x} \right)^{2} \, dx = 2. \quad \co
\end{equation}
\end{theorem}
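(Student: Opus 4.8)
The plan is to use that $P_{2n+1}$ is an odd polynomial, so that $Q(x) := P_{2n+1}(x)/x$ is a genuine polynomial, even and of degree $2n$, and then to evaluate the integral through the orthogonality of the Legendre family. Writing $Q$ in the Legendre basis, only even indices can appear by parity, so $Q(x) = \sum_{k=0}^{n} c_k P_{2k}(x)$. The orthogonality relation $\int_{-1}^{1} P_{2j}(x) P_{2k}(x)\,dx = \frac{2}{4k+1}\delta_{jk}$ then collapses the integral to a diagonal sum,
\[
\int_{-1}^{1} Q(x)^2\,dx = \sum_{k=0}^{n} \frac{2\,c_k^2}{4k+1},
\]
reducing the whole problem to determining the coefficients $c_k$ and summing this finite series.

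To pin down the $c_k$ I would multiply by $x$ and use $x Q(x) = P_{2n+1}(x)$ together with the three-term recurrence in the form $(4k+1)\,x P_{2k}(x) = (2k+1)P_{2k+1}(x) + 2k\,P_{2k-1}(x)$. Substituting the expansion and matching the coefficient of each odd polynomial $P_{2j+1}$ in the identity $\sum_{k} c_k\, x P_{2k} = P_{2n+1}$ gives a triangular system: the top relation yields $c_n = (4n+1)/(2n+1)$, while for $0 \le j < n$ one finds the first-order recurrence
\[
\frac{2j+1}{4j+1}\,c_j + \frac{2j+2}{4j+5}\,c_{j+1} = 0 .
\]
Unwinding this from $k = n$ downward (the $P_{2k-1}$ term harmlessly dropping out at $k=0$) produces the closed form $c_k = (-1)^{n-k}\,\frac{4k+1}{2n+1}\,\alpha_k$ with $\alpha_k = \frac{(2n)!!\,(2k-1)!!}{(2k)!!\,(2n-1)!!}$; I expect this portion to be routine bookkeeping.

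The substantive step, and the main obstacle, is the evaluation of the resulting series. Feeding the closed form into $\sum_k 2c_k^2/(4k+1)$ and simplifying the double factorials, the claim $\int_{-1}^{1} Q(x)^2\,dx = 2$ becomes equivalent to the clean finite identity
\[
\sum_{k=0}^{n} (4k+1)\left( \frac{(2k-1)!!}{(2k)!!} \right)^2 = \left( \frac{(2n+1)!!}{(2n)!!} \right)^2 .
\]
I would prove this by induction on $n$. The base case $n=0$ reads $1 = 1$, and for the step one computes the increment of the right-hand side,
\[
\left( \frac{(2n+1)!!}{(2n)!!} \right)^2 - \left( \frac{(2n-1)!!}{(2n-2)!!} \right)^2 = \left( \frac{(2n-1)!!}{(2n)!!} \right)^2 \bigl[(2n+1)^2 - (2n)^2\bigr],
\]
and since $(2n+1)^2 - (2n)^2 = 4n+1$ this matches exactly the new summand $(4n+1)\bigl((2n-1)!!/(2n)!!\bigr)^2$ appearing on the left, closing the induction. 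The only places demanding care are the parity bookkeeping in the coefficient recurrence and the manipulation of double factorials in this last identity.
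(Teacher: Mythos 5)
Your proof is correct, but it takes a genuinely different route from the paper's. The paper (following Persson--Strang, recalled in Section 2) never computes any expansion coefficients: it divides the three-term recurrence $(2m+1)P_{2m+1}(x) = (4m+1)xP_{2m}(x) - 2mP_{2m-1}(x)$ by $x$, squares, integrates, and uses orthogonality to kill the single cross term (since $P_{2m-1}(x)/x$ has degree $2m-2$), obtaining a first-order recurrence for the unknown integrals $c_m = \int_{-1}^{1}\left(P_{2m-1}(x)/x\right)^2\,dx$ themselves, namely $(2m+1)^2 c_{m+1} = 2(4m+1) + 4m^2 c_m$; the initial value $c_1 = 2$ then propagates the constant answer by a two-line induction. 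You instead compute the full Fourier--Legendre expansion $P_{2n+1}(x)/x = \sum_{k=0}^{n} c_k P_{2k}(x)$ and apply orthogonality as a Parseval identity; your triangular system, the closed form $c_k = (-1)^{n-k}\frac{4k+1}{2n+1}\cdot\frac{(2n)!!\,(2k-1)!!}{(2k)!!\,(2n-1)!!}$, and the reduction to the telescoping identity $\sum_{k=0}^{n}(4k+1)\left(\frac{(2k-1)!!}{(2k)!!}\right)^2 = \left(\frac{(2n+1)!!}{(2n)!!}\right)^2$ all check out. Your argument carries heavier bookkeeping, but it yields strictly more than the theorem: the explicit coefficients $c_k$, i.e.\ all inner products of $P_{2n+1}(x)/x$ against the even Legendre polynomials, which is precisely the kind of refinement pursued in the Diekema--Koornwinder paper cited here, and it isolates the whole evaluation in a clean, self-contained combinatorial identity. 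The paper's recursion on the integral value is the shorter path: one orthogonality observation and an induction, with no coefficients ever made explicit.
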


The goal of this note is to present extensions of Theorem \ref{thm-int1} to other families of orthogonal polynomials
$\{ A_{n}(x) \}$, with weight $w_{A}(x)$.  The 
integrals evaluated here have the form 
\begin{equation}
I_{n}(a,b;w) = \int_{a}^{b} \left( \frac{A_{n}(x) - A_{n}(0)}{x} \right)^{2} \, w_{A}(x) dx,
\end{equation}
\noindent
where the term $A_{n}(0)$ is introduced to guarantee  convergence of the integral.

The rest of this introduction states our results. The polynomials considered here include
Legendre, Hermite, Chebyshev, Laguerre and Gegenbauer. Some basic properties of each of these polynomials are 
in the introduction to the corresponding sections. 

Section \ref{legendre-even} contains the evaluation complementary to \eqref{legend-odd} for even indexed Legendre
polynomials. The statement is given in terms of 
\begin{equation}
\beta(n) = \begin{cases} 0 & \quad \text{if } \,\, n \,\, \text{is odd} \\ 
2^{-n} \binom{n}{n/2}  & \quad \text{if } \,\, n \,\, \text{is even}.
\end{cases}
\end{equation}

\begin{theorem}
\label{thm-legendre1}
For $n \in \mathbb{N}$ and $P_{n}$ the Legendre polynomial,
\begin{equation}
\int_{-1}^{1} \left( \frac{P_{n}(x)-P_{n}(0)}{x} \right)^{2} \, dx = 2 \left[ 1 - \beta^{2}(n) \right]. \quad \co
\end{equation}
\end{theorem}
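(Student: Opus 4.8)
The plan is to establish the identity by two successive integrations by parts: the first removes the $x^{-2}$ factor, and the second exploits orthogonality. Throughout write $p = P_{n}(0)$, $f(x) = P_{n}(x) - p$ and $Q(x) = f(x)/x$. Since $f(0)=0$, the quotient $Q$ is a genuine polynomial of degree $n-1$, so the integrand $Q^{2}$ is continuous on $[-1,1]$ and the integral converges.

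First I would integrate by parts with $u = f^{2}$ and $dv = x^{-2}\,dx$, $v = -x^{-1}$, obtaining
\[
\int_{-1}^{1} \frac{f(x)^{2}}{x^{2}}\,dx = \left[-\frac{f(x)^{2}}{x}\right]_{-1}^{1} + 2\int_{-1}^{1} \frac{f(x)f'(x)}{x}\,dx .
\]
The boundary term is legitimate across the apparent singularity at $x=0$ because $f(x)^{2}/x$ vanishes there (as $f(x)=O(x)$); formally one splits the interval at $\pm\varepsilon$ and lets $\varepsilon\to 0$, the interior contributions vanishing. Since $f' = P_{n}'$ and $f/x = Q$, the remaining integral is exactly $2\int_{-1}^{1} Q(x)P_{n}'(x)\,dx$.

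The key simplification comes from a second integration by parts: $\int_{-1}^{1} Q P_{n}'\,dx = [QP_{n}]_{-1}^{1} - \int_{-1}^{1} Q'P_{n}\,dx$, and the last integral vanishes identically because $Q'$ has degree at most $n-2$ and $P_{n}$ is orthogonal to every polynomial of lower degree. Everything therefore reduces to boundary evaluations. Using $P_{n}(\pm 1) = (\pm 1)^{n}$ together with $Q(1) = 1-p$ and $Q(-1) = p-(-1)^{n}$, I would assemble
\[
\int_{-1}^{1} Q^{2}\,dx = -(1-p)^{2} - \big((-1)^{n} - p\big)^{2} + 2\big(2 - p - (-1)^{n} p\big),
\]
which a short computation shows equals $2(1-p^{2})$ for both parities of $n$ (for odd $n$ one also uses $p=0$). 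Since $p^{2} = \beta^{2}(n)$ by the classical value $P_{n}(0) = (-1)^{n/2}2^{-n}\binom{n}{n/2}$ for even $n$, this is the claim; note that the odd case recovers Theorem \ref{thm-int1}.

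The step I expect to require the most care is the justification of the first integration by parts through the singularity at the origin, that is, verifying that the boundary contributions at $\pm\varepsilon$ genuinely vanish and that no principal-value subtlety arises. Once that is secured, the orthogonality argument eliminating $\int_{-1}^{1} Q'P_{n}\,dx$ makes the remainder purely algebraic.
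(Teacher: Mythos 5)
Your proof is correct, and it takes a genuinely different route from the paper's. The paper works from the three-term recurrence $(n+1)P_{n+1}(x) - (2n+1)xP_{n}(x) + nP_{n-1}(x) = 0$: dividing by $x$, squaring, and integrating (using orthogonality plus the known norm $\int_{-1}^{1}P_{2m+1}^{2}\,dx = 2/(4m+3)$) yields the first-order difference equation $4(m+1)^{2}w_{m+1} = 2(4m+3) + (2m+1)^{2}w_{m}$, which is then solved by a homogeneous-plus-particular decomposition; the odd-index case is quoted separately from Persson--Strang. Your double integration by parts instead evaluates the integral in closed form directly: everything is pushed into boundary terms at $x = \pm 1$ together with the single integral $\int_{-1}^{1}Q'P_{n}\,dx$, which orthogonality kills, so you need only $P_{n}(\pm 1) = (\pm 1)^{n}$, the value $P_{n}(0)$, and orthogonality --- no $L^{2}$ norm and no recurrence to solve. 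Your computation also treats both parities at once; in fact the algebra gives $2(1-p^{2})$ uniformly because $\left((-1)^{n}\right)^{2} = 1$, so your parenthetical appeal to $p = 0$ for odd $n$ is not even needed. Your handling of the origin is sound: $f(x) = O(x)$ makes the $\pm\varepsilon$ boundary contributions vanish, and since $Q^{2}$ and $QP_{n}'$ are polynomials there is no principal-value subtlety. What the paper's method buys in exchange is portability: the recurrence template transfers essentially verbatim to the Hermite, Chebyshev, Laguerre and Gegenbauer integrals treated in later sections, whereas your trick of integrating $x^{-2}$ against $f^{2}$ is special to the constant weight of the Legendre case --- for a nonconstant weight $w$ the first integration by parts generates an extra term involving $w'$, and the clean reduction to boundary values is lost.
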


Section \ref{hermite-even} contains the analog to Theorem \ref{thm-legendre1} for Hermite polynomials $H_{n}(x)$. These 
form a family of orthogonal 
polynomials on $\mathbb{R}$  with respect to the weight $w_{H}(x) = e^{-x^{2}}$. 

\begin{theorem}
For $n \in \mathbb{N}$ and $H_{n}$ the Hermite  polynomial, then 
\begin{equation}
\int_{-\infty}^{\infty} \left( \frac{H_{n}(x)-H_{n}(0)}{x} \right)^{2} \, e^{-x^{2}}  dx = \sqrt{\pi} n! 2^{n+1}  \left[ 1 - \beta(n) \right].
\quad \co
\end{equation}
\end{theorem}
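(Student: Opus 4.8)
The plan is to exploit the orthogonality of the Hermite polynomials directly. Since $H_n(0)$ is a constant equal to the constant term of $H_n$, the function $p(x) := (H_n(x)-H_n(0))/x$ is a genuine polynomial of degree $n-1$. Expanding it in the Hermite basis, $p = \sum_{k=0}^{n-1} c_k H_k$, and using $\int_{-\infty}^{\infty} H_j H_k\, e^{-x^2}\,dx = \sqrt{\pi}\,2^k k!\,\delta_{jk}$, the integral in question collapses to $\sqrt{\pi}\sum_{k=0}^{n-1} c_k^2\, 2^k k!$. So the whole problem reduces to computing the coefficients $c_k$.

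To find them, I would multiply the expansion by $x$ and use the recurrence $xH_k = \tfrac12 H_{k+1} + k H_{k-1}$ (equivalent to $H_{k+1}=2xH_k-2kH_{k-1}$), then match coefficients in the identity $H_n(x) = xp(x) + H_n(0)H_0$. Comparing coefficients of $H_m$ gives a two-term downward recurrence $c_{m-1} = -2(m+1)c_{m+1}$ for $1\le m\le n-1$, together with the boundary data $c_{n+1}=c_n=0$, $c_{n-1}=2$, and the closing equation $c_1 = -H_n(0)$ coming from $m=0$. Solving yields $c_{n-1-2j} = 2(-2)^j\prod_{i=1}^{j}(n+1-2i)$, with all coefficients of the opposite parity to $n-1$ vanishing; for even $n$ the closing equation is then an automatic consistency check, reducing to the elementary identity $2^{n/2}(n-1)!! = n!/(n/2)!$. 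Substituting back, the powers of two telescope since $4^{j+1}2^{n-1-2j}=2^{n+1}$, and one is left with
\[
\int_{-\infty}^{\infty}\left(\frac{H_n(x)-H_n(0)}{x}\right)^2 e^{-x^2}\,dx = \sqrt{\pi}\,2^{n+1}\,S_n, \qquad S_n := \sum_{j=0}^{\lfloor (n-1)/2\rfloor}\Big[\prod_{i=1}^{j}(n+1-2i)\Big]^2 (n-1-2j)!.
\]

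The remaining and genuinely nontrivial step is the closed evaluation $S_n = n!\,[1-\beta(n)]$, which is exactly what matches the claimed right-hand side, and I expect this finite-sum identity to be the main obstacle. The cleanest route is to observe that the summand has term ratio $t_{j+1}/t_j = (n-1-2j)/(n-2-2j)$, so that $S_n = (n-1)!\;{}_2F_1\!\left(-\tfrac{n-1}{2},1;\,1-\tfrac{n}{2};\,1\right)$ is a terminating Gauss series; evaluating it by the Chu--Vandermonde / Gauss summation theorem produces $n!$ when $n$ is odd (where $\beta(n)=0$) and $n!\,[1-2^{-n}\binom{n}{n/2}]$ when $n$ is even, with the parity entering precisely through $\beta$. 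Should the half-integer parameters in the even case obstruct a direct appeal to Gauss's theorem, the same identity can be established by a short induction on $n$ driven by the coefficient recurrence above; in either case it is readily checked against the values $n=1,2,3,4$, which give $4\sqrt{\pi},\,8\sqrt{\pi},\,96\sqrt{\pi}$ and $480\sqrt{\pi}$.
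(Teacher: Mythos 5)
Your reduction is correct---I checked the coefficient recurrence $c_{m-1}=-2(m+1)c_{m+1}$, the boundary values, the parity vanishing, the consistency identity $2^{n/2}(n-1)!!=n!/(n/2)!$ for even $n$, the collapse of the powers of two, and the test values $n\le 4$---and your route is genuinely different from the paper's. The paper never expands the filter polynomial in the Hermite basis: it squares the three-term recurrence $H_{2m+2}(x)=2xH_{2m+1}(x)-2(2m+1)H_{2m}(x)$ (arranged so that every term is a polynomial), integrates against $e^{-x^{2}}$, kills the cross term by orthogonality, and so obtains the first-order recurrence $I_{2m+2}=\sqrt{\pi}\,(2m+1)!\,2^{2m+3}+4(2m+1)^{2}I_{2m}$, which it solves by a particular-plus-homogeneous ansatz; moreover the paper proves only the even case this way, quoting Diekema--Koorwinder for odd indices. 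Your Parseval-type argument treats both parities uniformly and produces the explicit Hermite expansion of the filter polynomial as a by-product, at the cost of heavier bookkeeping and of a terminating-sum evaluation $S_{n}=n!\,[1-\beta(n)]$ at the end.

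That last step is where your primary route has a real (not merely hypothetical) obstruction, though not quite where you placed it: for even $n$ the problem is not the half-integer top parameter but the bottom parameter $c=1-\tfrac{n}{2}$, which is a non-positive integer. The series ${}_2F_1\bigl(-\tfrac{n-1}{2},1;1-\tfrac{n}{2};1\bigr)$ then has vanishing denominator factors from $j=\tfrac{n}{2}$ onward while its numerator parameters never terminate the series, so it exists only as the truncated sum $S_{n}$, and Gauss/Chu--Vandermonde cannot be invoked verbatim (for odd $n$ the top parameter terminates first and Chu--Vandermonde cleanly gives $S_{n}=n!$, as you claim). Your fallback induction does work and is short, so it should be promoted to the main argument: your explicit formula for $c_{n-1-2j}$ gives $\prod_{i=1}^{j}(n+1-2i)=(n-1)\prod_{i=1}^{j-1}(n-1-2i)$, hence $S_{n}=(n-1)!+(n-1)^{2}S_{n-2}$ with $S_{1}=S_{2}=1$, and the induction closes in the even case via the identity $n\binom{n}{n/2}=4(n-1)\binom{n-2}{n/2-1}$. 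Amusingly, this recurrence for $S_{n}$ is exactly the paper's recurrence for $I_{2m}$ after rescaling, so the two proofs reconverge at the final step---yours having reached that recurrence from explicit coefficients rather than from squaring the three-term recurrence.
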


Section \ref{cheby-int1} analyzes the corresponding integral for Chebyshev polynomials of both kinds $\{ T_{n} \}$ and 
$\{U_{n} \}$. These are defined by the relations 
\begin{equation}
\cos(n x) = T_{n}(\cos x) \quad \text{ and } \quad \frac{\sin((n+1) x) }{\sin x} = U_{n}(\cos x).
\end{equation}
\noindent
The family $\{T_{n} \}$ is orthogonal with respect to the weight $(1-x^{2})^{-1/2}$ and $\{ U_{n} \}$ with respect to 
$(1-x^{2})^{1/2}$ on the interval $[-1, 1]$.

\begin{theorem}
For $n \in \mathbb{N}$, 
\begin{eqnarray*}
  \int_{-1}^{1} \left[ \frac{T_{n}(x) - T_{n}(0)}{x} \right]^{2} \frac{dx}{\sqrt{1-x^{2}}} & = &  \pi n,  \quad \co \\
 \int_{-1}^{1} \left[ \frac{U_{n}(x) - U_{n}(0)}{x} \right]^{2}  \sqrt{1-x^{2}} \, dx & = & 
\begin{cases} \pi n & \quad \text{for} \,\, n  \,\, \text{even} \quad \co \\
 \pi (n+1) & \quad \text{if} \,\, n \,\, \text{is odd}.
 \end{cases}
\end{eqnarray*}
\end{theorem}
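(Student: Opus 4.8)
The plan is to reduce both integrals to trigonometric form by the substitution $x=\cos\theta$, under which $T_n(x)=\cos(n\theta)$, $U_n(x)=\sin((n+1)\theta)/\sin\theta$, and $dx/\sqrt{1-x^2}=-\,d\theta$ while $\sqrt{1-x^2}\,dx=-\sin^2\theta\,d\theta$. The same substitution supplies the constants $T_n(0)=\cos(n\pi/2)$ and $U_n(0)=\sin((n+1)\pi/2)$, each of which vanishes in precisely the parity class of $n$ that makes the apparent singularity of the integrand at $\theta=\pi/2$ removable. After substituting and flipping the limits, the two integrals become
\[
\int_0^\pi\left(\frac{\cos(n\theta)-T_n(0)}{\cos\theta}\right)^2 d\theta
\qquad\text{and}\qquad
\int_0^\pi\frac{\big(\sin((n+1)\theta)-U_n(0)\sin\theta\big)^2}{\cos^2\theta}\,d\theta .
\]

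For the $T_n$-integral I would expand the degree-$(n-1)$ polynomial $(T_n(x)-T_n(0))/x$ in the Chebyshev basis, that is, determine constants $c_k$ with $(\cos(n\theta)-T_n(0))/\cos\theta=\sum_k c_k\cos(k\theta)$. The clean way to pin down the $c_k$ is to clear the denominator: multiplying by $\cos\theta$ and applying $2\cos\theta\cos(k\theta)=\cos((k+1)\theta)+\cos((k-1)\theta)$ turns the identity into a two-term recurrence in which the top coefficient is fixed by matching the frequency-$n$ term and all lower coefficients are then forced to alternate in sign, giving $c_k\in\{0,\pm2\}$ for $0<k<n$ and $c_0\in\{0,\pm1\}$. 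The classical orthogonality of $\{T_k\}$ (Parseval for cosine series) then collapses the integral to $\pi c_0^2+\tfrac{\pi}{2}\sum_{k\ge1}c_k^2$, and counting the surviving coefficients gives $\pi n$ in both parities of $n$.

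For the $U_n$-integral the same mechanism works with sines. Writing $(U_n(x)-U_n(0))/x=\sum_k d_k U_k(x)$ and then multiplying by $x$ and by $\sin\theta$ yields $\sin((n+1)\theta)-U_n(0)\sin\theta=\cos\theta\sum_k d_k\sin((k+1)\theta)$; here $2\cos\theta\sin((k+1)\theta)=\sin((k+2)\theta)+\sin(k\theta)$ again produces a two-term recurrence $d_m=-d_{m-2}$ away from the ends, with the top coefficient set by the frequency-$(n+1)$ term and the bottom coefficient tied to $U_n(0)$ through the coefficient of $\sin\theta$. The orthogonality $\int_0^\pi\sin((k+1)\theta)\sin((j+1)\theta)\,d\theta=\tfrac{\pi}{2}\delta_{kj}$ reduces the integral to $\tfrac{\pi}{2}\sum_k d_k^2$; the surviving $d_k$ number $(n+1)/2$ (each $\pm2$) when $n$ is odd and $n/2$ (each $\pm2$) when $n$ is even, producing $\pi(n+1)$ and $\pi n$ respectively, exactly the claimed split.

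The main obstacle—really the only delicate point—is the parity bookkeeping. The constants $T_n(0)$ and $U_n(0)$ depend on $n\bmod 4$, the polynomials $(T_n(x)-T_n(0))/x$ and $(U_n(x)-U_n(0))/x$ have definite but opposite parities, and in each case the recurrence splits into an even-index chain and an odd-index chain that play different roles according to the parity of $n$. The care lies in correctly seeding each chain—the top coefficient from the leading frequency, the bottom one from the constant or $\sin\theta$ term—and verifying that these two prescriptions agree where the chains meet; once the coefficients are known, the orthogonality step is immediate.
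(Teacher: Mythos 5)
Your proof is correct, but it takes a genuinely different route from the paper's. The paper never computes the expansion of the filter polynomial: it squares the three-term recurrence in the form $\frac{T_{n+1}(x)-T_{n+1}(0)}{x} = 2T_{n}(x) - \frac{T_{n-1}(x)-T_{n-1}(0)}{x}$ (valid since $T_{n+1}(0)=-T_{n-1}(0)$), integrates against the weight, kills the cross term by orthogonality (the subtracted quotient has degree $n-2$), and so obtains a two-step recursion for the integrals themselves, $a_{n+1} = 2\pi + a_{n-1}$, which together with $a_{1}=\pi$, $a_{2}=2\pi$ gives $a_{n}=\pi n$; the $U_{n}$ case is handled the same way. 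You instead determine the full Chebyshev expansion $\frac{T_{n}(x)-T_{n}(0)}{x} = 2T_{n-1}(x) - 2T_{n-3}(x) + \cdots$ (terminating in $\pm T_{0}$ for odd $n$, in $\pm 2T_{1}$ for even $n$), and the analogous sine expansion for $U_{n}$, then apply Parseval. Your coefficient bookkeeping checks out: for $T_{n}$ one gets $(n-1)/2$ coefficients equal to $\pm 2$ plus $c_{0}=\pm 1$ when $n$ is odd, and $n/2$ coefficients equal to $\pm 2$ when $n$ is even, both yielding $\pi n$; for $U_{n}$ one gets $(n+1)/2$ respectively $n/2$ coefficients, all $\pm 2$, yielding $\pi(n+1)$ and $\pi n$. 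Your route buys more information: the explicit finite Fourier expansion of the filter polynomial, which explains structurally why the $T$-answer is parity-independent (the exceptional coefficient $c_{0}=\pm 1$ is paired with the exceptional normalization $\pi$ rather than $\pi/2$, exactly compensating) while the $U$-answer is not ($d_{0}=\pm 2$ sits against the uniform normalization $\pi/2$). The paper's route buys brevity and uniformity: the identical squared-recurrence-plus-orthogonality trick is applied verbatim to the Legendre, Hermite and Gegenbauer families elsewhere in the paper, where no trigonometric parametrization is available, whereas your Parseval argument is special to the Chebyshev case.
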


Section \ref{lague-int1} describes the evaluation of the corresponding integrals for the family of Laguerre polynomials. These 
are orthogonal polynomials on $[0, \infty)$ with the weight function $w_{L}(x) = e^{-x}$.

\begin{theorem}
For $n \in \mathbb{N}$, let $L_{n}(x)$ be the Laguerre polynomial. Then 
\begin{equation}
\int_{0}^{\infty} \left[ \frac{L_{n}(x) - L_{n}(0)}{x} \right]^{2} e^{-x} \, dx = 2n - H_{n} \quad \co
\end{equation}
\noindent
where $H_{n}$ is the harmonic number.
\end{theorem}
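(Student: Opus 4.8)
The plan is to exploit the orthonormality of the Laguerre family. Since $L_n(0)=1$ for every $n$, the numerator is $L_n(x)-1$, a polynomial of degree $n$ that vanishes at $x=0$; hence $q_n(x) := (L_n(x)-1)/x$ is a genuine polynomial of degree $n-1$. Writing $q_n = \sum_{k=0}^{n-1} c_k L_k$ and invoking the normalization $\int_0^\infty L_j(x) L_k(x)\, e^{-x}\,dx = \delta_{jk}$, Parseval's identity reduces the target integral to $\sum_{k=0}^{n-1} c_k^2$. Everything then hinges on computing the coefficients $c_k$.

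To find them I would telescope. Using the contiguous relation $x L_m^{(1)}(x) = (m+1)\big(L_m(x)-L_{m+1}(x)\big)$ together with the partial-sum identity $L_m^{(1)}(x)=\sum_{k=0}^m L_k(x)$, one obtains $L_{m+1}(x)-L_m(x) = -\tfrac{x}{m+1}\sum_{k=0}^m L_k(x)$. Summing over $m=0,\dots,n-1$ telescopes the left-hand side to $L_n(x)-L_0(x)=L_n(x)-1$, so after dividing by $x$,
\[
q_n(x) = -\sum_{m=0}^{n-1}\frac{1}{m+1}\sum_{k=0}^{m}L_k(x).
\]
Interchanging the order of summation collects the coefficient of $L_k$ as $c_k = -\sum_{m=k}^{n-1}\frac{1}{m+1} = -(H_n-H_k)$, with the convention $H_0=0$.

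Parseval then gives the integral as $\sum_{k=0}^{n-1}(H_n-H_k)^2$, and it remains to prove this equals $2n-H_n$. I would write $H_n-H_k=\sum_{j=k+1}^n \tfrac1j$, expand the square as a double sum, and swap the order of summation; the number of indices $k$ with $0\le k<\min(i,j)$ produces
\[
\sum_{k=0}^{n-1}(H_n-H_k)^2 = \sum_{i=1}^n\sum_{j=1}^n \frac{\min(i,j)}{ij}.
\]
Splitting off the diagonal $i=j$ (which contributes $\sum_{i=1}^n \tfrac1i = H_n$) and using symmetry on the off-diagonal part, where $\min(i,j)/(ij)=1/\max(i,j)$ gives $2\sum_{i<j}\tfrac1j = 2\sum_{j=1}^n\tfrac{j-1}{j}=2(n-H_n)$, yields $H_n + 2(n-H_n) = 2n-H_n$, as claimed.

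The main obstacle is the clean expansion of $q_n$ in the Laguerre basis: the quotient $(L_n-1)/x$ does not visibly look like a Laguerre combination, and the key move is to recognize each difference $L_{m+1}-L_m$ as $x$ times a partial sum of Laguerre polynomials via the first associated family $L_m^{(1)}$. Once that contiguous identity is secured, both the telescoping that produces $c_k=-(H_n-H_k)$ and the closing harmonic-number evaluation are routine rearrangements of finite sums.
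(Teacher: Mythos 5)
Your proof is correct, and it takes a genuinely different route from the paper's. The paper never invokes orthogonality in this section: it starts from the explicit expansion $L_n(x)=\sum_{k=0}^n \frac{(-1)^k}{k!}\binom{n}{k}x^k$, squares $(L_n(x)-L_n(0))/x$, integrates term by term against $e^{-x}$, and reduces the integral to the binomial sums $B_{n,k}=\sum_{j=1}^n \frac{(-1)^j}{j}\binom{n}{j}\binom{k+j-2}{k-1}$, which it then evaluates by difference-equation arguments for harmonic-number sums (giving $B_{n,1}=-H_n$) and by Gauss's ${}_2F_1$ evaluation at $1$ (giving $B_{n,k}=-1/(k-1)$ for $2\le k\le n$). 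You instead expand $q_n=(L_n-1)/x$ in the orthonormal Laguerre basis: the contiguous relation $xL_m^{(1)}(x)=(m+1)\left(L_m(x)-L_{m+1}(x)\right)$ combined with the partial-sum identity $L_m^{(1)}=\sum_{k=0}^m L_k$ (both standard facts about the associated family) telescopes to give $c_k=-(H_n-H_k)$, Parseval converts the integral into $\sum_{k=0}^{n-1}(H_n-H_k)^2$, and your min/max double-sum rearrangement correctly evaluates this to $2n-H_n$; I checked each step, including the coefficient extraction and the diagonal/off-diagonal split giving $H_n+2(n-H_n)$. Your method is actually closer in spirit to the other sections of the paper, which all lean on orthogonality, and it buys more than the paper's computation does: it produces the actual expansion coefficients of $(L_n-1)/x$ in the Laguerre basis (more information than the integral alone), and it would extend naturally to the generalized polynomials $L_n^{(\alpha)}$. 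The paper's argument, by contrast, needs nothing about the associated family $L_m^{(1)}$ and stays at the level of classical binomial identities, but pays for that self-containment with the hypergeometric evaluation of $B_{n,k}$.
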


Finally, Section \ref{gegenb-int1} discusses the Gegenbauer polynomials $C_{n}^{(a)}(x)$. These are defined by the
recurrence
\begin{equation}
C_{n}^{(a)}(x) = \frac{1}{n} 
\left[ 2x(n+a-1) C_{n-1}^{(a)}(x) - (n+2a-2) C_{n-2}^{(a)}(x) \right], \quad \co
\end{equation}
\noindent
with initial conditions $C_{0}^{(a)}(x) = 1$ and $C_{1}^{(a)}(x) = 2ax$. The Gegenbauer polynomials are orthogonal with 
respect to the weight $(1-x^{2})^{a-1/2}$ and are normalized by 
\begin{equation}
\int_{-1}^{1} \left[ C_{n}^{a}(x) \right]^{2} (1-x^{2})^{a-1/2} \, dx = \frac{\pi \Gamma(n+2 a)}{2^{2a-1} n! (n+a) 
\Gamma^{2}(a) }. \quad \co
\end{equation}

The corresponding result is:

\begin{theorem}
For $n \in \mathbb{N}$, let $C_{n}^{a}(x)$ be the Gegenbauer  polynomial. Then 
\begin{equation}
\gamma_{n}(a) = \int_{-1}^{1} \left[ \frac{C_{n}^{a}(x) - C_{n}^{a}(0)}{x} \right]^{2} (1-x^2)^{a-1/2}\, dx
\end{equation}
\noindent 
where 
\begin{equation}
\gamma_{2n+1}(a) = \frac{\pi \Gamma(2a+2n+1)}{2^{2a-2} (2n+1)! \Gamma^{2}(a)} \quad \co
\end{equation}
\noindent
for an odd index and, in the case of even index,
\begin{equation}
\gamma_{2n}(a) = \frac{\pi \Gamma(2a) \Gamma(a+n)}{2^{2a-3} \Gamma(2n+1) \Gamma^{3}(a)} \R_{n}(a), \quad \co
\end{equation}
\noindent
where $\R_{n}(a)$ is the polynomial
\begin{equation}
\R_{n}(a)  =   2^{2n-1} \left( a + \tfrac{1}{2} \right)_{n} - \binom{2n-1}{n-1} (a)_{n}.
\end{equation}
\end{theorem}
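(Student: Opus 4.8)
The plan is to expand the polynomial $\dfrac{C_n^a(x)-C_n^a(0)}{x}$ in the orthogonal basis $\{C_m^a\}$ and to apply orthogonality together with the stated normalization. Throughout, write $w(x)=(1-x^2)^{a-1/2}$ and $h_m=\int_{-1}^{1}[C_m^a(x)]^2 w(x)\,dx=\frac{\pi\Gamma(m+2a)}{2^{2a-1}m!(m+a)\Gamma^2(a)}$, and recall the parity $C_n^a(-x)=(-1)^nC_n^a(x)$, so that $C_{2m+1}^a(0)=0$ while $C_{2m}^a(0)=(-1)^m(a)_m/m!$. The single quantity driving everything is $J_m:=\int_{-1}^{1}\frac{C_{2m+1}^a(x)}{x}w(x)\,dx$. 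I would obtain it \emph{without any summation}: the function $Q_m(x):=C_{2m+1}^a(x)/x$ is an even polynomial of degree $2m$, so $Q_m=\sum_{j=0}^m q_j C_{2j}^a$, and its top coefficient is the ratio of leading coefficients of $C_{2m+1}^a$ and $C_{2m}^a$, namely $q_m=\frac{2(a+2m)}{2m+1}$. Since orthogonality gives $q_m h_{2m}=C_{2m}^a(0)J_m$, this pins down $J_m$ in closed form.

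For the odd index $2n+1$, we have $C_{2n+1}^a(0)=0$, so the integrand is exactly $Q_n^2 w$. The crucial observation is that for \emph{every} $j$, writing $C_{2j}^a=[C_{2j}^a-C_{2j}^a(0)]+C_{2j}^a(0)$ and noting that $(C_{2j}^a-C_{2j}^a(0))/x$ has degree $2j-1<2n+1$ forces $q_j h_{2j}=C_{2j}^a(0)J_n$. Hence $\gamma_{2n+1}=\sum_j q_j^2 h_{2j}=J_n^2\sum_j [C_{2j}^a(0)]^2/h_{2j}$, whereas evaluating $Q_n=\sum_j q_j C_{2j}^a$ at $x=0$ gives $Q_n(0)=J_n\sum_j [C_{2j}^a(0)]^2/h_{2j}$. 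Dividing one relation by the other cancels the sum and yields the identity $\gamma_{2n+1}=J_n\,Q_n(0)$, where $Q_n(0)=\frac{(-1)^n 2(a)_{n+1}}{n!}$ is the coefficient of $x$ in $C_{2n+1}^a$. Substituting the closed forms of $J_n$ and $Q_n(0)$ and simplifying with the duplication formula produces the stated value; no series is summed.

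For the even index $2n$, the function $R(x):=(C_{2n}^a(x)-C_{2n}^a(0))/x$ is odd of degree $2n-1$, so $R=\sum_{j=1}^n r_j C_{2j-1}^a$. Because $C_{2j-1}^a(x)/x$ is a genuine polynomial of degree $2j-2<2n$, orthogonality annihilates the $C_{2n}^a$-piece and leaves $r_j h_{2j-1}=-C_{2n}^a(0)K_j$, where $K_j=\int_{-1}^{1}\frac{C_{2j-1}^a(x)}{x}w(x)\,dx=J_{j-1}$ is already known. Therefore $\gamma_{2n}=[C_{2n}^a(0)]^2\sum_{j=1}^n K_j^2/h_{2j-1}$. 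Here the evaluation-at-zero trick of the odd case is vacuous, since $C_{2j-1}^a(0)=R(0)=0$, so this finite sum must genuinely be evaluated.

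That sum is the main obstacle. I would set $\Sigma_n=\sum_{j=1}^n K_j^2/h_{2j-1}$, note the trivial recurrence $\Sigma_n=\Sigma_{n-1}+K_n^2/h_{2n-1}$, and prove by induction on $n$ that $[C_{2n}^a(0)]^2\Sigma_n$ equals the claimed $\gamma_{2n}$. Writing $G_n$ for the target value divided by $[C_{2n}^a(0)]^2$, the inductive step is precisely the Pochhammer identity $G_n-G_{n-1}=K_n^2/h_{2n-1}$; after inserting the closed form of $K_n=J_{n-1}$ and clearing $\Gamma$-factors by duplication, this collapses to one algebraic identity for $\R_n(a)=2^{2n-1}(a+\tfrac12)_n-\binom{2n-1}{n-1}(a)_n$. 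Its two-term shape is exactly what one should expect: writing $K_n^2/h_{2n-1}$ as a hypergeometric summand and splitting the factor $\frac{a+1+2i}{(a+i)(2i+1)}$ (with $i=j-1$) by partial fractions produces two pieces whose telescoped boundary values are the $2^{2n-1}(a+\tfrac12)_n$ and $\binom{2n-1}{n-1}(a)_n$ terms; equivalently the identity can be certified directly by Gosper–Zeilberger. I would fix all constants against the base case $n=1$, where $\R_1(a)=a+1$ and a direct Beta-integral evaluation gives $\gamma_2=2a(a+1)\sqrt{\pi}\,\Gamma(a+\tfrac12)/\Gamma(a)$, matching the formula.
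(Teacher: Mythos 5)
Your proposal is correct, but it takes a genuinely different route from the paper. The paper never expands anything in the Gegenbauer basis: it squares the three-term recurrence $n C_n^{(a)}(x) = 2x(n+a-1)C_{n-1}^{(a)}(x) - (n+2a-2)C_{n-2}^{(a)}(x)$, integrates against the weight, and kills the cross term by orthogonality, obtaining the single recurrence \eqref{recu-gamma} valid for both parities; the odd case is then solved by normalizing to a sequence $u_m(a) \equiv 1$, while the even case is rescaled into the polynomial $\R_n(a)$, for which a first-order recurrence (Proposition \ref{recu-poly1}) is derived and against which the closed form $2^{2n-1}\left(a+\tfrac{1}{2}\right)_n - \binom{2n-1}{n-1}(a)_n$ is verified. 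Your Parseval argument buys two things the paper's proof does not have: the odd case closes with no induction at all, via the identity $\gamma_{2n+1} = J_n Q_n(0)$, and you obtain the moments $J_m$ and all expansion coefficients in closed form as byproducts. On the other hand, your even case converges to exactly the paper's endgame: since $[C_{2n}^{a}(0)]^2/[C_{2n-2}^{a}(0)]^2 = (2n+2a-2)^2/(2n)^2$, your telescoping step $G_n - G_{n-1} = K_n^2/h_{2n-1}$ is, after clearing Gamma factors by the duplication formula, precisely the recurrence of Proposition \ref{recu-poly1}, so the one computation both proofs must ultimately perform --- checking that the two-term Pochhammer expression satisfies that recurrence, with base value $\R_1(a) = a+1$ --- is identical in the two arguments. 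I checked your intermediate closed forms, namely $q_m = 2(a+2m)/(2m+1)$, $Q_n(0) = (-1)^n 2(a)_{n+1}/n!$, the resulting $J_n$, and the base case $\gamma_2 = 2a(a+1)\sqrt{\pi}\,\Gamma\left(a+\tfrac{1}{2}\right)/\Gamma(a)$, and they are all consistent with the stated theorem, so your outline is sound as written.
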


The last section contains some conjectures on the polynomial $\R_{n}(a)$. 

\section{The extension of Legendre integrals to even indices}
\label{legendre-even}

The proof of Theorem \ref{thm-int1} in \cite{persson-2003a} starts with the classical recurrence 
\begin{equation}
(n+1)P_{n+1}(x) - (2n+1)xP_{n}(x) + nP_{n-1}(x) = 0 \quad \co
\label{three-term} 
\end{equation}
\noindent
satisfied by the Legendre polynomials. This appears as entry $7.221.1$ in \cite{gradshteyn-2015a}. Divide \eqref{three-term} 
by $x$ and write $n = 2m$ to obtain 
\begin{equation}
\label{three-term2}
(2m+1)\frac{P_{2m+1}(x)}{x} - (4m+1)P_{2m}(x) + 2m\frac{P_{2m-1}(x)}{x}= 0. \quad \co
\end{equation}
\noindent
Since $P_{k}(0) = 0 $ for $k$ odd,  this is a polynomial relation. 

Define 
\begin{equation}
c_{m} = \int_{-1}^{1} \left( \frac{P_{2m-1}(x)}{x} \right)^{2} \, dx.
\end{equation}
\noindent
Square \eqref{three-term2} and integrate over $[-1, 1]$ to obtain 
\begin{multline*}
(2m+1)^{2}c_{m+1} = (4m+1)^{2} \int_{-1}^{1} P_{2m}^{2}(x) \, dx  \quad \co  \\ - 
4m(4m+1) \int_{-1}^{1} P_{2m}(x) \frac{P_{2m-1}(x)}{x} \, dx + 4m^{2} c_{m}.
\end{multline*}
\noindent
The integral 
\begin{equation}
\label{sq-leg1}
\int_{-1}^{1} P_{2m}^{2}(x) \, dx = \frac{2}{4m+1} \quad \co
\end{equation}
\noindent
is well-known (see entry $7.221.1$ in \cite{gradshteyn-2015a}). The other integral is zero since $P_{2m-1}(x)/x$ is a polynomial of degree $2m-2$, so it is 
orthogonal to $P_{2m}(x)$.  It follows that 
\begin{equation}
(2m+1)^{2} c_{m+1} = 2(4m+1) + 4m^{2} c_{m-1}. \quad \co
\end{equation}
\noindent
The initial value $c_{1}=2$ shows that $c_{2m+1} =  2$ for all $m \in \mathbb{N}$.

The goal of this section is to present the result given in Theorem \ref{thm-int1} for the case of an even index $n$. The 
polynomial $P_{n}$ does not vanish at $x=0$, so it is necessary to modify the integrand slightly. 

\begin{theorem}
\label{thm-int2}
For all $m \in \mathbb{N}$, 
\begin{equation}
\int_{-1}^{1} \left( \frac{P_{2m}(x)-P_{2m}(0)}{x} \right)^{2} \, dx = 2 \left[ 1 - \frac{1}{2^{4m} } \binom{2m}{m}^{2} \right].
\quad \co
\end{equation}
\end{theorem}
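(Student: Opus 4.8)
The plan is to mirror the odd-index argument from the start of this section: produce a recurrence in $m$ for the target integral and solve it by induction. Write $E_m(x) := \dfrac{P_{2m}(x)-P_{2m}(0)}{x}$, which is an \emph{odd} polynomial of degree $2m-1$ (the subtraction of $P_{2m}(0)$ is exactly what kills the constant term and makes the quotient a polynomial), and set $d_m := \int_{-1}^{1} E_m(x)^2\,dx$, the quantity to be evaluated.

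The first and least obvious step is to instantiate the classical recurrence \eqref{three-term} at $n=2m-1$ rather than at $n=2m$, obtaining $2m\,P_{2m}(x) = (4m-1)x\,P_{2m-1}(x) - (2m-1)P_{2m-2}(x)$. Evaluating this at $x=0$ (using $P_{2m-1}(0)=0$) and subtracting, then dividing by $x$, yields the closed recurrence
\[
2m\,E_m(x) = (4m-1)P_{2m-1}(x) - (2m-1)E_{m-1}(x).
\]
The point is that this relation involves only $E_m$, $E_{m-1}$ and a single genuine Legendre polynomial $P_{2m-1}$, with no stray odd-index quotient terms to track.

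Next I would square this identity and integrate over $[-1,1]$. Two facts collapse the right-hand side: the normalization $\int_{-1}^{1} P_{2m-1}^2\,dx = \tfrac{2}{4m-1}$, and the vanishing of the cross term $\int_{-1}^{1} P_{2m-1}(x)E_{m-1}(x)\,dx$, which holds because $\deg E_{m-1}=2m-3<2m-1$ and $P_{2m-1}$ is orthogonal to every polynomial of strictly lower degree. This produces
\[
4m^2 d_m = 2(4m-1) + (2m-1)^2 d_{m-1}, \qquad d_0 = 0.
\]

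Finally I would verify by induction that $d_m = 2\bigl[1 - P_{2m}(0)^2\bigr]$ solves this recurrence, where $P_{2m}(0)^2 = 2^{-4m}\binom{2m}{m}^2$, which is precisely the claimed right-hand side. The induction uses the ratio $P_{2m-2}(0)^2 = \tfrac{4m^2}{(2m-1)^2}\,P_{2m}(0)^2$; after substituting $d_{m-1}=2\bigl[1-P_{2m-2}(0)^2\bigr]$, the algebra collapses via the identity $(4m-1)+(2m-1)^2 = 4m^2$, and everything cancels cleanly. I expect the main obstacle to be conceptual rather than computational, namely recognizing the $n=2m-1$ instantiation together with the evaluate-at-zero-and-subtract manipulation that keeps $E_m$ polynomial and the recurrence self-contained; once that is in place, the orthogonality argument and the closing induction are routine.
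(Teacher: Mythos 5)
Your proposal is correct and follows essentially the same route as the paper: the same three-term recurrence (instantiated at $n=2m-1$ rather than $n=2m+1$, which after the evaluate-at-zero-and-subtract step yields exactly the paper's recurrence $4m^{2}d_{m}=2(4m-1)+(2m-1)^{2}d_{m-1}$ with the index shifted by one), the same orthogonality argument to kill the cross term, and the same normalization $\int_{-1}^{1}P_{2m-1}^{2}\,dx = \tfrac{2}{4m-1}$. The only cosmetic difference is that you verify the closed form $d_{m}=2\bigl[1-P_{2m}(0)^{2}\bigr]$ by induction, whereas the paper solves the recurrence by splitting it into a particular solution and a homogeneous solution; both closings are routine and your inductive algebra checks out.
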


The proof of this assertion  begins with \eqref{three-term} for $n = 2m+1$, in the form, 
\begin{multline}
\label{recu-1}
(2m+2) \left[ P_{2m+2}(x)- P_{2m+2}(0) \right] - (4m+3) x P_{2m+1}(x)  \quad \co  \\+ 
(2m+1) \left[ P_{2m}(x) - P_{2m}(0) \right] = 0.
\end{multline}
\noindent
The contribution of the polynomials at $x=0$ vanishes since 
\begin{equation}
-(2m+2)P_{2m+2}(0) - (2m+1)P_{2m}(0) = 0 \quad \co 
\end{equation}
\noindent
in view of 
\begin{equation}
P_{2m}(0) = \frac{(-1)^{m} \binom{2m}{m}}{2^{2m}}. \quad \co 
\end{equation}
\noindent
Then \eqref{recu-1} may be written as 
\begin{multline}
\label{recu-2}
2(m+1) \left[ \frac{P_{2m+2}(x) - P_{2m+2}(0)}{x} \right] =  \quad \co \\
(4m+3)P_{2m+1}(x) - (2m+1)  \left[ \frac{P_{2m}(x) - P_{2m}(0)}{x} \right].
\end{multline}
\noindent
Squaring this relation produces 
\begin{multline}
\label{recu-3}
4(m+1)^{2} \left[ \frac{P_{2m+2}(x) - P_{2m+2}(0)}{x} \right]^{2} = \quad \co  \\
(4m+3)^{2} P_{2m+1}^{2}(x) -   \\
2(4m+3)(2m+1) P_{2m+1}(x) \times \left[ \frac{P_{2m}(x) - P_{2m}(0)}{x} \right] \\
+ (2m+1)^{2} \left[ \frac{P_{2m}(x) - P_{2m}(0)}{x} \right]^{2}. 
\end{multline}

Denote
\begin{equation}
\label{def-um}
w_{m} = \int_{-1}^{1} \left[ \frac{P_{2m}(x)-P_{2m}(0)}{x} \right]^{2} \, dx,
\end{equation}
\noindent
integrate \eqref{recu-3} from $x=-1$ to $+1$ and use \eqref{sq-leg1} 
\begin{equation}
\int_{-1}^{1} P_{2m+1}^{2}(x) \, dx = \frac{2}{4m+3}.   \quad \co 
\end{equation}
\noindent
Also apply the  relation 
\begin{equation}
\int_{-1}^{1} P_{2m+1}(x) \times \left[ \frac{P_{2m}(x) - P_{2m}(0}{x} \right] \, dx = 0 \quad \co
\end{equation}
\noindent
coming from  the orthogonality of $P_{2m+1}(x)$ and $(P_{2m}(x) - P_{2m}(0))/x$, the latter being a polynomial of 
degree $2m-1$.  This proves the next statement.

\begin{lemma}
\label{lem-2.2}
The integrals $w_{m}$ defined in \eqref{def-um} satisfy the recurrence 
\begin{equation}
4(m+1)^{2}w_{m+1} = 2(4m+3) + (2m+1)^{2} w_{m}, \quad \co
\label{recu-U1}
\end{equation}
\noindent 
with initial condition $u_{0} = 0$.
\end{lemma}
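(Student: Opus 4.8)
The plan is to run the same square-and-integrate scheme that produced the odd-index constant in Theorem \ref{thm-int1}, but now carrying the subtracted value $P_{2m}(0)$ along so that every quantity remains a genuine polynomial after division by $x$. First I would take the classical three-term recurrence \eqref{three-term} with $n = 2m+1$ and rewrite it as the shifted relation \eqref{recu-1}, grouping $P_{2m+2}(x) - P_{2m+2}(0)$ with $P_{2m}(x) - P_{2m}(0)$. The reason for grouping this way is that the shifted polynomials $(P_{2m}(x) - P_{2m}(0))/x$ are exactly the objects whose squared $L^2$ norms define $w_m$ in \eqref{def-um}, so a recurrence among them will translate directly into a recurrence for $w_m$.

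The step I expect to be the crux is verifying that the constant terms cancel, namely that $(2m+2)P_{2m+2}(0) + (2m+1)P_{2m}(0) = 0$; without this the shifted relation is not divisible by $x$ and the whole scheme collapses. This identity is not automatic, but it can be obtained in two ways: either by evaluating \eqref{three-term} directly at $x=0$, where the middle term drops out because of its explicit factor of $x$, or by substituting the explicit value $P_{2m}(0) = (-1)^m \binom{2m}{m}/2^{2m}$ and using the elementary ratio $\binom{2m+2}{m+1}/\binom{2m}{m} = 2(2m+1)/(m+1)$. Once this cancellation is secured, dividing by $x$ yields the clean polynomial identity \eqref{recu-2}.

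From there the argument is a routine $L^2$ computation. I would square \eqref{recu-2} to obtain \eqref{recu-3} and integrate term by term over $[-1,1]$. Two standard facts then finish the job: the normalization $\int_{-1}^1 P_{2m+1}^2(x)\, dx = 2/(4m+3)$ from \eqref{sq-leg1} takes care of the leading term, and orthogonality annihilates the cross term. The key observation for the cross term is a degree count: since $P_{2m}(x) - P_{2m}(0)$ has no constant term, $(P_{2m}(x) - P_{2m}(0))/x$ is a polynomial of degree $2m-1$, strictly below $2m+1$, hence orthogonal to $P_{2m+1}$. Collecting the surviving pieces produces exactly the recurrence \eqref{recu-U1}.

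Finally, the initial condition is immediate: since $P_0(x) \equiv 1$ is constant, $P_0(x) - P_0(0)$ vanishes identically, so $w_0 = 0$. With the recurrence and this seed in hand, one can later unwind \eqref{recu-U1} to reach the closed form asserted in Theorem \ref{thm-int2}, but that telescoping is logically separate from — and follows after — establishing the recurrence itself.
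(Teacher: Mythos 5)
Your proposal is correct and follows essentially the same route as the paper: rewrite the three-term recurrence \eqref{three-term} at $n=2m+1$ in the shifted form \eqref{recu-1}, cancel the constant terms, divide by $x$, square, and integrate using the normalization \eqref{sq-leg1} together with orthogonality of $P_{2m+1}$ against the degree-$(2m-1)$ polynomial $(P_{2m}(x)-P_{2m}(0))/x$. Your verification of the cancellation $(2m+2)P_{2m+2}(0)+(2m+1)P_{2m}(0)=0$ by evaluating \eqref{three-term} at $x=0$ is a slightly cleaner alternative to the paper's appeal to the explicit value of $P_{2m}(0)$, but the argument is otherwise identical.
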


The expression for $w_{m}$ in Theorem \ref{thm-int2} follows directly from Lemma \ref{lem-2.2}. Indeed, the
 solution $w_{m}$ of 
\eqref{recu-U1} has the form $w_{m} = w_{m}^{(h)} + w_{m}^{(p)}$, where $w_{m}^{(p)}$ is a particular solution and 
$w_{m}^{(h)}$ is the general solution of the homogeneous equation 
\begin{equation}
\label{homo-equ1}
(2m+2)^2w_{m+1}^{(h)} = (2m+1)^{2}  w_{m}^{(h)}.
\end{equation}
\noindent
Since the coefficients of $w_{m+1}$ and $w_{m}$ in \eqref{recu-U1} 
have the same degree, one tries as a particular solution a constant 
$w_{m}^{(p)} =  C$. Replacing in \eqref{recu-U1} yields $C =  2$.  The general solution of the homogeneous equation 
\eqref{homo-equ1} is obtained by iterating the recurrence to obtain 
\begin{equation}
w_{m}^{(h)} =\left[  \frac{[(2m-1)(2m-3) \cdots 3 \cdot 1}{(2m)(2m-2) \cdots 4 \cdot 2 } \right]^{2} 
 = 2^{-4m} \binom{2m}{m}^{2}
\end{equation}
\noindent
up to a scalar. Therefore, the solution of \eqref{recu-U1} has the form 
\begin{equation}
w_{m} = 2 + \alpha 2^{-4m} \binom{2m}{m}^{2}.
\end{equation}
\noindent
The initial condition $w_{0} = 0$ gives $\alpha = - 2$ proving Theorem \ref{thm-int2}.

\section{The Hermite case}
\label{hermite-even}

The Hermite polynomials $\{ H_{n}(x) \}$ form a family of orthogonal polynomials on $\mathbb{R}$ with respect to a 
gaussian weight $e^{-x^{2}}$ with the normalization condition (appearing as entry $8.959(1).2$ in  \cite{gradshteyn-2015a}) 
\begin{equation}
\label{norma-her}
\int_{-\infty}^{\infty} H_{n}(x) H_{m}(x) e^{-x^{2}} \, dx = \sqrt{\pi} 2^{n} n! \delta_{nm} \quad \co
\end{equation}
\noindent
where $\delta_{nm}$ is the Kronecker delta. This family is the so-called \texttt{physicists Hermite}, a second family with 
$\tfrac{1}{2}x^{2}$ instead of $x^{2}$ in the exponent of the weight also appears in the literature. The 
latter  are the 
\texttt{probabilists Hermite}, so one should be careful in checking identities.

As part of a general procedure in extending  Theorem \ref{thm-int1},  E.~Diekema and T.~Koorwinder \cite{diekema-2011a} 
established the next statement.

\begin{theorem}
\label{theorem-1}
For all $n \in \mathbb{N}$, define 
\begin{equation}
\label{hermite-form1}
I_{2n+1} = \int_{-\infty}^{\infty} \left( \frac{H_{2n+1}(x)}{x} \right)^{2} \, e^{-x^{2}} dx.
\end{equation}
\noindent
Then 
\begin{equation}
\label{hermite-odd}
I_{2n+1} = \sqrt{\pi} 2^{2n+2} (2n+1)! \quad \co 
\end{equation}
\end{theorem}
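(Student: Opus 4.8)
The plan is to mimic the Persson--Strang argument for \eqref{legend-odd} almost verbatim, replacing the Legendre recurrence by the Hermite three-term recurrence
\begin{equation*}
H_{n+1}(x) = 2x H_{n}(x) - 2n H_{n-1}(x).
\end{equation*}
First I would specialize the index to $n \mapsto 2n$, giving $H_{2n+1}(x) = 2x H_{2n}(x) - 4n H_{2n-1}(x)$, and then divide through by $x$. Because $H_{2n+1}(0) = H_{2n-1}(0) = 0$ for the odd-index polynomials, this division is legitimate and produces the polynomial identity
\begin{equation*}
\frac{H_{2n+1}(x)}{x} = 2 H_{2n}(x) - 4n \, \frac{H_{2n-1}(x)}{x},
\end{equation*}
which is the exact analog of \eqref{three-term2}.

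Next I would square this relation and integrate against the weight $e^{-x^{2}}$ over $\mathbb{R}$. The square term $4\int_{-\infty}^{\infty} H_{2n}^{2}(x) e^{-x^{2}}\,dx$ is evaluated by the normalization \eqref{norma-her}, contributing $\sqrt{\pi}\, 2^{2n+2} (2n)!$. The term in $I_{2n-1}$ contributes $16 n^{2} I_{2n-1}$. The crucial point is that the cross term $-16n \int_{-\infty}^{\infty} H_{2n}(x)\,\tfrac{H_{2n-1}(x)}{x}\, e^{-x^{2}}\,dx$ vanishes: since $H_{2n-1}$ is an odd polynomial of degree $2n-1$ with no constant term, the quotient $H_{2n-1}(x)/x$ is an (even) polynomial of degree $2n-2 < 2n$, hence orthogonal to $H_{2n}$ with respect to $e^{-x^{2}}$. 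Confirming this degree-and-parity bookkeeping is the one place where care is needed, and I regard it as the main (though modest) obstacle; everything else is mechanical. The outcome is the recurrence
\begin{equation*}
I_{2n+1} = \sqrt{\pi}\, 2^{2n+2} (2n)! + 16 n^{2}\, I_{2n-1}.
\end{equation*}

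Finally I would solve this recurrence with the initial value $I_{1} = \int_{-\infty}^{\infty} (H_{1}(x)/x)^{2} e^{-x^{2}}\,dx = 4\sqrt{\pi}$, which already matches \eqref{hermite-odd} at $n=0$. Rather than solving from scratch, I would simply verify the claimed closed form $I_{2n+1} = \sqrt{\pi}\, 2^{2n+2} (2n+1)!$ by induction: substituting it into the recurrence and cancelling the common factor $\sqrt{\pi}\,2^{2n}(2n-1)!$ reduces the identity to $(2n+1)(2n) = 2n + 4n^{2}$, which holds. Since the base case and the inductive step both check out, this establishes \eqref{hermite-odd} for all $n \in \mathbb{N}$.
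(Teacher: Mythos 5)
Your proposal is correct. One point of comparison worth noting: the paper itself does not prove this theorem at all; it is quoted from Diekema and Koorwinder, who obtain it as part of a general procedure for extending the Persson--Strang integral. What you have written is therefore a self-contained, elementary proof filling a gap the paper leaves to the literature, and it does so by exactly the technique the paper uses for every other family it treats: square the three-term recurrence, kill the cross term by orthogonality, apply the normalization \eqref{norma-her}, and solve the resulting first-order recurrence (compare the reproduction of the Persson--Strang argument in Section \ref{legendre-even} and the proof of Theorem \ref{theorem-2}, which runs the same recurrence for the even-index Hermite case). Your key bookkeeping step is sound: $H_{2n-1}$ is an odd polynomial, so $H_{2n-1}(x)/x$ is an even polynomial of degree $2n-2 < 2n$ and hence orthogonal to $H_{2n}$, and the base case $I_{1}=4\sqrt{\pi}$ and the induction both check out. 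The only blemish is cosmetic: the factor you cancel in the induction step is $\sqrt{\pi}\,2^{2n+2}(2n-1)!$ rather than $\sqrt{\pi}\,2^{2n}(2n-1)!$, but the reduced identity $(2n+1)(2n)=2n+4n^{2}$ that you state is precisely what that cancellation yields, so the verification of \eqref{hermite-odd} stands.
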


The goal of this section is to determine the analog of \eqref{hermite-form1} for even indices.  Introduce the notation 
\begin{equation}
\label{value-herm1}
I_{2n} = \int_{-\infty}^{\infty} \left( \frac{H_{2n}(x)-H_{2n}(0)}{x} \right)^{2} \, e^{-x^{2}} dx,
\end{equation}
\noindent
complementary to \eqref{hermite-form1}.  Symbolic evaluation using \texttt{Mathematica} shows that 
\begin{equation}
O_{n} = \frac{I_{2n}}{2^{2n+1} \sqrt{\pi}}
\end{equation}
\noindent
is an odd integer sequence, starting with $1, \, 15, \, 495, \, 29295, \, 2735775$. A search in OEIS (the 
Online Encyclopedia of Integer
Sequences) produced  entry $\text{A}151816$. This is how the statement of the next theorem was obtained.

\begin{theorem}
\label{theorem-2}
For $n \in \mathbb{N}$ the integral $I_{2n}$ defined in \eqref{value-herm1}  is computed  by 
\begin{equation}
I_{2n} =2  \sqrt{\pi}  (2n)! \left[ 2^{2n} - \binom{2n}{n} \right]. \quad \co
\end{equation}
\end{theorem}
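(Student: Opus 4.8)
The plan is to follow the scheme used for the even-index Legendre polynomials in Section~\ref{legendre-even}, substituting the Hermite three-term recurrence and normalization for their Legendre counterparts. The starting point is the classical recurrence $H_{2n+2}(x) = 2x\,H_{2n+1}(x) - 2(2n+1)H_{2n}(x)$. Because $H_{2n+1}(0) = 0$, subtracting the value of this identity at $x=0$ affects only the even-indexed terms, and the resulting constant contribution $H_{2n+2}(0) + 2(2n+1)H_{2n}(0)$ vanishes on account of $H_{2n}(0) = (-1)^{n}(2n)!/n!$; this is exactly the cancellation observed in \eqref{recu-1} for the Legendre case. After the subtraction the identity is a polynomial relation divisible by $x$, so division gives
\begin{equation*}
\frac{H_{2n+2}(x) - H_{2n+2}(0)}{x} = 2H_{2n+1}(x) - 2(2n+1)\,\frac{H_{2n}(x) - H_{2n}(0)}{x}.
\end{equation*}

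Next I would square this relation and integrate it against the weight $e^{-x^{2}}$ over $\mathbb{R}$. Two simplifications occur just as in the Legendre argument. The cross term integrates to zero, since $(H_{2n}(x) - H_{2n}(0))/x$ is a polynomial of degree $2n-1$ and is therefore orthogonal to $H_{2n+1}$ under the Hermite weight. The square term $\int_{-\infty}^{\infty} H_{2n+1}^{2}(x)\,e^{-x^{2}}\,dx$ is evaluated by the normalization \eqref{norma-her} as $\sqrt{\pi}\,2^{2n+1}(2n+1)!$. This yields the first-order recurrence
\begin{equation*}
I_{2n+2} = \sqrt{\pi}\,2^{2n+3}(2n+1)! + 4(2n+1)^{2}\,I_{2n},
\end{equation*}
with the initial value $I_{0} = 0$ coming at once from $H_{0}\equiv 1$.

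It remains to solve the recurrence, which is the step requiring the most care but which runs completely parallel to the Legendre computation. I would write $I_{2n} = I_{2n}^{(p)} + I_{2n}^{(h)}$. A particular solution is found by trying $I_{2n}^{(p)} = \sqrt{\pi}\,2^{2n+1}(2n)!$, the substitution reducing to the identity $(2n+1)^{2} + (2n+1) = (2n+1)(2n+2)$. For the homogeneous equation $I_{2n+2}^{(h)} = 4(2n+1)^{2}I_{2n}^{(h)}$, iteration produces the product $4^{n}\prod_{k=0}^{n-1}(2k+1)^{2} = [(2n)!/n!]^{2} = (2n)!\binom{2n}{n}$, up to a scalar. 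Hence
\begin{equation*}
I_{2n} = \sqrt{\pi}\,2^{2n+1}(2n)! + \alpha\,(2n)!\binom{2n}{n},
\end{equation*}
and imposing $I_{0} = 0$ forces $\alpha = -2\sqrt{\pi}$, which gives the asserted value of $I_{2n}$.

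The only genuine obstacle I anticipate is bookkeeping rather than ideas: verifying that the constant terms cancel after evaluating at $x=0$ (so that division by $x$ is legitimate) and correctly matching the homogeneous solution with $(2n)!\binom{2n}{n}$. Both are routine, but they must be carried out precisely, since the final formula is the difference $2^{2n} - \binom{2n}{n}$ of two comparable quantities and is sensitive to any slip in the constants.
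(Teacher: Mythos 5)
Your proposal is correct and follows essentially the same route as the paper: the same recurrence $H_{2n+2}(x) = 2xH_{2n+1}(x) - 2(2n+1)H_{2n}(x)$, the same cancellation at $x=0$, the same orthogonality argument killing the cross term, and the same first-order recurrence $I_{2n+2} = \sqrt{\pi}\,2^{2n+3}(2n+1)! + 4(2n+1)^{2}I_{2n}$ solved by a particular-plus-homogeneous decomposition. The only cosmetic difference is that the paper first normalizes via $J_{m} = I_{2m}/\bigl(\sqrt{\pi}\,2^{2m+1}(2m)!\bigr)$ to get the simpler recurrence $(2m+2)J_{m+1} = (2m+1)J_{m}+1$ before solving, whereas you solve the recurrence for $I_{2n}$ directly; both computations are equivalent and your constants check out.
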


\begin{note}
The results stated in Theorems \ref{theorem-1} and \ref{theorem-2} can be combined  as 
\begin{equation}
I_{k} = 2 \sqrt{\pi} k! \left[ 2^{k} - \left. \begin{cases} 0 & \text{ if } k \text{ is odd} \\
\binom{k}{k/2} & \text{ if } k \text{ is even}  \end{cases}  \right\} \right]. \quad \co
\end{equation}
\end{note}

\medskip

The proof of Theorem \ref{theorem-2} begins with the recurrence
\begin{equation}
H_{n+1}(x) = 2xH_{n}(x) - 2nH_{n-1}(x), \quad \co
\end{equation}
\noindent
listed as entry $8.952.2$  in \cite{gradshteyn-2015a}.
Now let $n = 2m+1$ to obtain 
\begin{multline}
\label{rel-1}
\frac{H_{2m+2}(x) - H_{2m+2}(0)}{x} = \quad \co  \\
2x \left[ \frac{H_{2m+1}(x) - H_{2m+1}(0)}{x} \right] - 
2(2m+1) \left[ \frac{H_{2m}(x) - H_{2m}(0)}{x} \right],
\end{multline}
\noindent
where extra terms appearing in the previous identity, namely those with values of $H_{n}(x)$ at $x=0$, cancel each other 
using 
\begin{equation}
H_{n}(0) = \begin{cases} 
0 & \quad \textnormal{if} \,\, n \,\, \textnormal{ is odd} \\
(-1)^{n/2} \frac{n!}{(n/2)!} & \quad \textnormal{if} \,\, n \,\, \textnormal{ is even.} 
\end{cases}
\quad \co
\end{equation}
\noindent
Since $H_{2m+1}(0) = 0$,  \eqref{rel-1} becomes 
\begin{multline}
\label{rel-2}
\frac{H_{2m+2}(x) - H_{2m+2}(0)}{x} =  \\
2 H_{2m+1}(x) - 
2(2m+1) \left[ \frac{H_{2m}(x) - H_{2m}(0)}{x} \right]. \quad \co
\end{multline}
\noindent
Squaring gives 
\begin{eqnarray*}
\left[ \frac{H_{2m+2}(x) - H_{2m+2}(0)}{x} \right]^{2} & = & 
4 H_{2m+1}^{2}(x)  - 8(2m+1) H_{2m+1}(x) \left[ \frac{H_{2m}(x) - H_{2m}(0)}{x} \right] \\
& & + 4 (2m+1)^{2} \left[ \frac{H_{2m}(x) - H_{2m}(0)}{x} \right]^{2}. \nonumber  \quad \co
\end{eqnarray*}
\noindent
Now multiply by the weight $e^{-x^{2}}$ and integrate over $(-\infty, \infty)$. The integral coming from the second term vanishes 
since the bracketed expression is a polynomial of degree $2m-1$, so it is orthogonal to $H_{2m+1}(x)$. From the 
normalization \eqref{norma-her} one obtains the recurrence 
\begin{equation}
\label{recur-1}
I_{2m+2} =  \sqrt{\pi} (2m+1)! 2^{2m+3}+ 4(2m+1)^{2} I_{2m}. \quad \co
\end{equation}
\noindent
This is complemented by the initial data $I_{0} = 0$. Motivated by data generated from \eqref{recur-1}, introduce the
 auxiliary unknown 
\begin{equation}
J_{m} = \frac{I_{2m}}{\sqrt{\pi} \, 2^{2m+1} \, (2m)!} \quad \co
\end{equation}
\noindent
and reduce \eqref{recur-1} to 
\begin{equation}
\label{recur-2}
(2m+2)J_{m+1} = (2m+1)J_{m} + 1\quad \textnormal{with} \,\, J_{0} = 0. \quad \co
\end{equation}

\begin{lemma}
The solution to \eqref{recur-2} is given by 
\begin{equation}
\label{recur-3}
J_{m} =  1 - 2^{-2m} \binom{2m}{m}. \quad \co
\end{equation}
\end{lemma}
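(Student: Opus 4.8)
The plan is to eliminate the inhomogeneous term in \eqref{recur-2} by a shift, reducing it to a first-order homogeneous recurrence whose solution is an explicit product. Set $K_m = 1 - J_m$, so that the claimed identity \eqref{recur-3} becomes $K_m = 2^{-2m}\binom{2m}{m}$, while the initial condition $J_0 = 0$ reads $K_0 = 1$. Substituting $J_m = 1 - K_m$ into $(2m+2)J_{m+1} = (2m+1)J_m + 1$, the constant terms cancel and one is left with the homogeneous relation
\begin{equation}
(2m+2)K_{m+1} = (2m+1)K_m,
\end{equation}
which is exactly the mechanism already exploited in the Legendre computation of Section \ref{legendre-even}.

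From here the recurrence telescopes. Iterating $K_{m+1} = \tfrac{2m+1}{2m+2}K_m$ starting from $K_0 = 1$ gives
\begin{equation}
K_m = \prod_{j=0}^{m-1}\frac{2j+1}{2j+2} = \frac{(2m-1)!!}{(2m)!!}.
\end{equation}
It then remains only to identify this ratio of double factorials with the central binomial coefficient. Using $(2m-1)!!\,(2m)!! = (2m)!$ together with $(2m)!! = 2^m m!$ yields
\begin{equation}
\frac{(2m-1)!!}{(2m)!!} = \frac{(2m)!}{\big((2m)!!\big)^2} = \frac{(2m)!}{2^{2m}(m!)^2} = 2^{-2m}\binom{2m}{m},
\end{equation}
which is precisely the desired formula for $K_m$, and hence for $J_m = 1 - K_m$.

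There is essentially no obstacle here: the only non-mechanical step is recognizing the double-factorial-to-binomial identity in the last display, and even that can be sidestepped. As an alternative one may verify \eqref{recur-3} directly by induction on $m$, the inductive step reducing to the elementary ratio $\binom{2m+2}{m+1} = \frac{2(2m+1)}{m+1}\binom{2m}{m}$, which upon substitution into \eqref{recur-2} reproduces the closed form with index $m+1$. Either route is short, so I would lead with the homogeneous reduction since it mirrors the style of the Legendre argument.
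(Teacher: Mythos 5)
Your proof is correct and takes essentially the same route as the paper: the paper decomposes $J_m = 1 + \alpha J_m^{(h)}$ with constant particular solution $1$ and homogeneous solution $J_m^{(h)} = 2^{-2m}\binom{2m}{m}$, fixing $\alpha = -1$ from $J_0 = 0$, which is exactly your substitution $K_m = 1 - J_m$ read in the other direction. The only difference is that you actually derive the homogeneous solution by telescoping to $(2m-1)!!/(2m)!!$ and converting to the central binomial coefficient, a detail the paper merely asserts.
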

\begin{proof}
A particular solution of \eqref{recur-3} is $J_{m} = 1$. The homogeneous part 
\begin{equation}
(2m+2)J_{m+1} = (2m+1)J_{m} 
\end{equation}
\noindent
has a general solution 
\begin{equation}
J_{m}^{(h)} = 2^{-2m} \binom{2m}{m}
\end{equation}
\noindent
so that $J_{m} = 1 + \alpha J_{m}^{(h)}$ for some constant $\alpha$. The value $J_{0} = 0$ gives $\alpha = -1$. This 
completes the proof.
\end{proof}

The expression for $J_{m}$ now gives the evaluation for $I_{2m}$  in Theorem \ref{theorem-2}.

\section{The Chebyshev  case}
\label{cheby-int1}

The Chebyshev polynomials come in two flavors. The first kind $T_{n}(x)$, defined by 
\begin{equation}
T_{n}(\cos x) = \cos(nx)
\end{equation}
\noindent
form an orthogonal family on $[-1, 1]$ with respect to the weight $(1-x^{2})^{-1/2}$ and satisfy 
\begin{equation}
\int_{-1}^{1} \frac{T_{n}^{2}(x) \, dx}{\sqrt{1-x^{2}}} = \begin{cases} \pi & \quad \text{if} \,\,\, n = 0 \\ 
\frac{ \pi}{2}  & \quad \text{if} \,\,\, n \neq  0.  \quad \co
\end{cases}
\end{equation}
\noindent
These polynomials satisfy the recurrence $T_{n+1}(x) = 2xT_{n}(x) - T_{n-1}(x)$ \marginpar{\co} (for $n \geq 1$) with initial conditions 
$T_{0}(x) = 1$ and $T_{1}(x) = x$.

The Chebyshev polynomials of the second kind $U_{n}(x)$, defined by 
\begin{equation}
U_{n}(\cos x) = \frac{\sin((n+1)x)}{\sin x},
\end{equation}
\noindent
are orthogonal on $[-1,1]$ with respect to the weight $(1-x^{2})^{1/2}$ normalized by 
\begin{equation}
\int_{-1}^{1} U_{n}^{2}(x) \sqrt{1-x^{2}} \, dx = \frac{\pi}{2}.
\end{equation}
These polynomials satisfy the recurrence 
$U_{n+1}(x) = 2xU_{n}(x) - U_{n-1}(x)$ (for $n \geq 1$) with initial conditions 
$U_{0}(x) = 1$ and $U_{1}(x) = 2x$. This is the same recurrence as the one satisfied by $T_{n}(x)$.

The relevant integrals considered in the current work, in the case of  Chebyshev polynomials, are stated next.

\begin{theorem}
For $n \in \mathbb{N}$,
\begin{eqnarray*}
I_{n} & \equiv &  \int_{-1}^{1} \left[ \frac{T_{n}(x) - T_{n}(0)}{x} \right]^{2} \frac{dx}{\sqrt{1-x^{2}}} = \pi n,  \quad \co \\
J_{n} & \equiv &  \int_{-1}^{1} \left[ \frac{U_{n}(x) - U_{n}(0)}{x} \right]^{2}  \sqrt{1-x^{2}} \, dx = 
\begin{cases} \pi n & \quad \text{if} \,\, n \equiv 0 \bmod 2  \quad \co \\
 \pi (n+1) & \quad \text{if} \,\, n \equiv 1 \bmod 2.
 \end{cases}
 \nonumber
\end{eqnarray*}
  \end{theorem}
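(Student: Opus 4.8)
The plan is to handle both kinds at once, following the same recurrence–square–integrate scheme used for the Legendre and Hermite cases, while exploiting the feature that distinguishes Chebyshev from those families: the three-term recurrence has \emph{constant} coefficients. Write $\widetilde{T}_n(x) = (T_n(x)-T_n(0))/x$ and $\widetilde{U}_n(x) = (U_n(x)-U_n(0))/x$, so that $I_n = \int_{-1}^1 \widetilde{T}_n^2\,(1-x^2)^{-1/2}\,dx$ and $J_n = \int_{-1}^1 \widetilde{U}_n^2\,(1-x^2)^{1/2}\,dx$. First I would convert the recurrence. Evaluating $T_{n+1}(x) = 2xT_n(x)-T_{n-1}(x)$ at $x=0$ gives $T_{n+1}(0) = -T_{n-1}(0)$, so the constant terms recombine:
\begin{equation*}
T_{n+1}(x) - T_{n+1}(0) = 2x\,T_n(x) - \bigl[T_{n-1}(x) - T_{n-1}(0)\bigr].
\end{equation*}
Since $T_{n-1}(x)-T_{n-1}(0)$ vanishes at $x=0$ it is divisible by $x$, and dividing through yields the clean relation $\widetilde{T}_{n+1}(x) = 2T_n(x) - \widetilde{T}_{n-1}(x)$, with the identical identity $\widetilde{U}_{n+1}(x) = 2U_n(x) - \widetilde{U}_{n-1}(x)$ for the second kind. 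This is the crucial simplification: because the recurrence coefficients do not depend on $n$, the inhomogeneous part is just $2A_n(x)$, with no polynomial prefactor in $x$.

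Next I would square and integrate. Squaring $\widetilde{T}_{n+1} = 2T_n - \widetilde{T}_{n-1}$ and integrating against $(1-x^2)^{-1/2}$ produces
\begin{equation*}
I_{n+1} = 4\int_{-1}^1 T_n^2\,\frac{dx}{\sqrt{1-x^2}} - 4\int_{-1}^1 T_n\,\widetilde{T}_{n-1}\,\frac{dx}{\sqrt{1-x^2}} + I_{n-1}.
\end{equation*}
The polynomial $\widetilde{T}_{n-1}$ has degree $n-2$, so orthogonality of $T_n$ to all lower-degree polynomials annihilates the middle term, while $\int_{-1}^1 T_n^2\,(1-x^2)^{-1/2}\,dx = \pi/2$ for $n\geq 1$. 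This gives the constant-increment recurrence $I_{n+1} = I_{n-1} + 2\pi$ valid for $n\geq 1$. The same argument with weight $(1-x^2)^{1/2}$, using $\int_{-1}^1 U_n^2\,(1-x^2)^{1/2}\,dx = \pi/2$, yields $J_{n+1} = J_{n-1} + 2\pi$.

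The last step is to install the initial data, and this is where the even/odd split emerges. Here $\widetilde{T}_0 = \widetilde{U}_0 = 0$ force $I_0 = J_0 = 0$; next $\widetilde{T}_1 = 1$ gives $I_1 = \int_{-1}^1 (1-x^2)^{-1/2}\,dx = \pi$, whereas $U_1(x) = 2x$ gives $\widetilde{U}_1 = 2$ and hence $J_1 = 4\int_{-1}^1 \sqrt{1-x^2}\,dx = 2\pi$. Iterating $I_{n+1} = I_{n-1} + 2\pi$ from $I_0 = 0,\ I_1 = \pi$ gives $I_n = n\pi$ uniformly in $n$. Iterating $J_{n+1} = J_{n-1} + 2\pi$ from $J_0 = 0,\ J_1 = 2\pi$ gives $J_{2k} = 2k\pi$ and $J_{2k+1} = (2k+2)\pi$, that is $\pi n$ for even $n$ and $\pi(n+1)$ for odd $n$. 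The entire dichotomy therefore traces back to the single fact that $J_1 = 2\pi$ fails to equal $1\cdot\pi$, while $I_1 = \pi$ happens to fit the linear pattern.

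I do not expect a genuine obstacle here; the proof is structurally identical for the two kinds and the resolution of the recurrences is immediate. The only points demanding care are confirming that $\widetilde{A}_{n-1}$ really has degree $n-2$ (so the cross term vanishes by orthogonality), and remembering that the first-kind squared norm is $\pi$ rather than $\pi/2$ at the exceptional index $n=0$ — but that value never intervenes, since the recurrence invokes $\int_{-1}^1 T_n^2\,(1-x^2)^{-1/2}\,dx$ only for $n\geq 1$.
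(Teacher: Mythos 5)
Your proof is correct and follows essentially the same route as the paper's: divide the constant-coefficient recurrence by $x$, square, integrate, kill the cross term by orthogonality, and solve the resulting constant-increment recurrence $a_{n+1} = a_{n-1} + 2\pi$ from the initial values. The only difference is that you carry out the second-kind case explicitly (the paper dismisses it as ``similar'') and pinpoint $J_1 = 2\pi$ as the source of the even/odd dichotomy, which is a nice touch but not a different method.
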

  \begin{proof}
The proof proceeds as before.  Square the recurrence for $T_{n}$ to obtain
  \begin{eqnarray}
  \left[ \frac{T_{n+1}(x) - T_{n+1}(0)}{x} \right]^{2} & = & 4T_{n}^{2}(x) \label{recu-tn1} \quad \co  \\
  & & - 4 \, T_{n}(x) \cdot \left[ \frac{T_{n-1}(x) - T_{n-1}(0)}{x} \right] \nonumber \\
  & & +  \left[ \frac{T_{n-1}(x) - T_{n-1}(0)}{x} \right]^{2}. \nonumber 
  \end{eqnarray}
  \noindent
  Now divide by $\sqrt{1-x^{2}}$ and integrate over $[-1,1]$. The integral coming from the second line above vanishes 
  since $(T_{n-1}(x)-T_{n-1}(0))/x$ is a polynomial of degree $n-2$ and thus orthogonal to $T_{n}(x)$. With the 
  notation 
  \begin{equation}
  a_{n} = \int_{-1}^{1} \left[ \frac{T_{n}(x)-T_{n}(0)}{x} \right]^{2} \, \frac{dx}{\sqrt{1-x^{2}}}
  \end{equation}
  \noindent
  and the normalization for $T_{n}(x)$, the recurrence \eqref{recu-tn1} yields 
  \begin{equation}
  a_{n+1} = 2\pi + a_{n-1} \quad \text{ for } n > 0. \quad \co
  \end{equation}
  \noindent
  The initial conditions $a_{1} = \pi$ and $a_{2} = 2 \pi$ lead to  the result. The proof for $U_{n}(x)$ is similar.
  \end{proof}

\section{The Laguerre case}
\label{lague-int1}

The Laguerre polynomials $L_{n}(x)$ form an orthogonal sequence on $[0, \infty)$ with the weight $w(x) = e^{-x}$. The 
explicit expression 
\begin{equation}
\label{lague-1}
L_{n}(x) = \sum_{k=0}^{n} \frac{(-1)^{k}}{k!} \binom{n}{k} x^{k} \quad \co
\end{equation}
\noindent
may be found as the special case $\alpha= 0$ of entry $8.970.1$ in  \cite{gradshteyn-2015a}. 

\smallskip 

The goal of this section is dictated by  the next theorem.

\begin{theorem}
For $n \in \mathbb{N}$, let $L_{n}(x)$ be the Laguerre polynomial. Then 
\begin{equation}
\int_{0}^{\infty} \left[ \frac{L_{n}(x) - L_{n}(0)}{x} \right]^{2} e^{-x} \, dx = 2n - H_{n} \quad \co
\end{equation}
\noindent
where $H_{n} = 1 + \tfrac{1}{2} + \cdots + \tfrac{1}{n}$ is the harmonic number.
\end{theorem}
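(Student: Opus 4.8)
The plan is to depart from the squaring‑and‑recurrence scheme of the previous sections, since it does not close up here. Set $M_n(x) := (L_n(x)-L_n(0))/x = (L_n(x)-1)/x$, using $L_n(0)=1$ from \eqref{lague-1}, and write $I_n$ for the integral to be evaluated. Starting from the three‑term recurrence $(n+1)L_{n+1}(x) = (2n+1-x)L_n(x) - nL_{n-1}(x)$, dividing by $x$ and removing the values at the origin gives $(n+1)M_{n+1} = (2n+1)M_n - nM_{n-1} - L_n$. Unlike the Legendre, Hermite and Chebyshev cases, the right‑hand side now carries \emph{two} centered quotients, $M_n$ and $M_{n-1}$, alongside the bare polynomial $L_n$. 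Squaring and integrating against $e^{-x}$ does annihilate the products containing $L_n$, because $\deg M_n,\deg M_{n-1}<n$ force orthogonality to $L_n$, but it leaves the cross term $\int_0^\infty M_nM_{n-1}\,e^{-x}\,dx$, which is not zero. One therefore obtains a coupled system instead of a single recurrence for $I_n$, and I would abandon this route.

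Instead I would compute the expansion of $M_n$ directly in the orthonormal Laguerre basis. The key tool is the contiguous relation $xL_n'(x)=n\bigl(L_n(x)-L_{n-1}(x)\bigr)$, equivalently $L_n'(x)=-\sum_{k=0}^{n-1}L_k(x)$. Dividing the first form by $x$, which is legitimate because $L_n-L_{n-1}$ vanishes at the origin, produces the telescoping identity
\[
M_n(x)-M_{n-1}(x)=\frac{L_n(x)-L_{n-1}(x)}{x}=\frac1n L_n'(x)=-\frac1n\sum_{k=0}^{n-1}L_k(x).
\]
Since $M_0=0$, summing over $m=1,\dots,n$ and interchanging the order of summation gives the closed form
\[
M_n(x)=-\sum_{k=0}^{n-1}\bigl(H_n-H_k\bigr)L_k(x),
\]
where $H_k$ is the $k$‑th harmonic number and $H_0=0$.

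Orthonormality, $\int_0^\infty L_j(x)L_k(x)e^{-x}\,dx=\delta_{jk}$, then collapses the integral to a sum of squared coefficients,
\[
I_n=\int_0^\infty M_n(x)^2e^{-x}\,dx=\sum_{k=0}^{n-1}\bigl(H_n-H_k\bigr)^2,
\]
and it remains only to prove the elementary identity $\sum_{k=0}^{n-1}(H_n-H_k)^2=2n-H_n$. For this I would write $H_n-H_k=\sum_{j=k+1}^n 1/j$, expand the square, and interchange summations to reach $\sum_{i,j=1}^n \min(i,j)/(ij)$; separating the diagonal from the off‑diagonal part turns this into $H_n+2\sum_{j=1}^n (j-1)/j = H_n+2(n-H_n)=2n-H_n$, which is the claim.

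The conceptual obstacle is precisely the first paragraph: the Laguerre recurrence refuses to collapse into a two‑index recurrence for $I_n$ the way the earlier families did, so the substance of the proof is spotting the telescoping structure of $M_n-M_{n-1}$ that yields the explicit Laguerre expansion. After that the problem is purely combinatorial, and the closing harmonic‑number identity is routine.
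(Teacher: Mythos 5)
Your proof is correct, and it takes a genuinely different route from the paper's. The paper never touches the three-term recurrence either (your diagnosis of why it fails -- the non-vanishing cross term $\int_0^\infty M_nM_{n-1}e^{-x}\,dx$ -- is accurate and is presumably why the authors changed tactics for this family); instead it squares the explicit binomial expansion \eqref{lague-1}, integrates term by term, and then evaluates the resulting double sums $B_{n,k}$ through a sequence of lemmas, the crucial one ($B_{n,k}=-1/(k-1)$ for $k\geq 2$) resting on Gauss's ${}_2F_1$ evaluation, plus two harmonic-number binomial identities. Your argument is structurally cleaner: the derivative relations $xL_n'(x)=n\bigl(L_n(x)-L_{n-1}(x)\bigr)$ and $L_n'(x)=-\sum_{k=0}^{n-1}L_k(x)$ give the telescoping formula
\begin{equation*}
M_n(x)=-\sum_{k=0}^{n-1}(H_n-H_k)\,L_k(x),
\end{equation*}
so that Parseval (orthonormality of the $L_k$) converts the integral into $\sum_{k=0}^{n-1}(H_n-H_k)^2$, and the remaining identity $\sum_{k=0}^{n-1}(H_n-H_k)^2=2n-H_n$ is elementary (your interchange of summation yielding $\sum_{i,j=1}^n \min(i,j)/(ij)=H_n+2(n-H_n)$ checks out, as do the small cases $n=1,2,3$). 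What your approach buys is conceptual transparency -- the answer $2n-H_n$ appears as a sum of squared Fourier--Laguerre coefficients, and the method visibly generalizes to other weights in the spirit of Diekema--Koornwinder. What the paper's approach buys is self-containedness at the level of classical summation formulas, and its intermediate identity \eqref{final-1} has independent interest (the paper notes its connection to values of Jacobi polynomials). Both are complete proofs; yours is shorter once the two standard Laguerre derivative identities are granted.
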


\begin{proof}
Square \eqref{lague-1} to produce
\begin{equation}
\left[ \frac{L_{n}(x) - L_{n}(0)}{x} \right]^{2} = 
\sum_{k=1}^{n} \sum_{j=1}^{n} \binom{n}{k} \binom{n}{j} \frac{(-1)^{k+j}}{k! j!} x^{k+j-2}.
\quad \co
\end{equation}
\noindent
Now integrate with respect to the weight $e^{-x}$ and with  the notation 
\begin{equation}
\label{notation-b}
B_{n,k} = \sum_{j=1}^{n} \frac{(-1)^{j}}{j} \binom{n}{j} \binom{k+j-2}{k-1}
\end{equation}
write
\begin{equation}
\label{main-1}
\int_{0}^{\infty} \left[ \frac{L_{n}(x) - L_{n}(0)}{x} \right]^{2}  e^{-x} \, dx = 
\sum_{k=1}^{n} \frac{(-1)^{k}}{k} \binom{n}{k} B_{n,k}.  \quad \co
\end{equation}

The next step is to evaluate $B_{n,k}$.

\begin{lemma}
\label{lemma-1}
For $n \in \mathbb{N}$ and $H_{n} $ the harmonic number, 
\begin{equation}
\label{lhs-eq1}
\sum_{k=2}^{n} \frac{(-1)^{k}}{k-1} \binom{n}{k} = n(H_{n}-1). \quad \co
\end{equation}
\end{lemma}
\begin{proof}
Let $\alpha_{n}$ be the left-hand side of \eqref{lhs-eq1}. Then 
\begin{eqnarray}
\alpha_{n} - \alpha_{n-1} & = & \sum_{k=2}^{n} \frac{(-1)^{k}}{k-1} \binom{n}{k} - \sum_{k=2}^{n-1} \frac{(-1)^{k}}{k-1} 
\binom{n-1}{k} \quad \co  \\
& = & \frac{(-1)^{n}}{n-1}  + \sum_{k=2}^{n-1} \frac{(-1)^{k}}{k-1} \left[ \binom{n}{k} - \binom{n-1}{k} \right] \nonumber 
\quad \co \\
& = & \sum_{\ell=1}^{n-1} \frac{(-1)^{\ell+1}}{\ell} \binom{n-1}{\ell}.  \quad \co \nonumber 
\end{eqnarray}
\noindent
Denote this last sum by $T$ and observe that 
\begin{equation}
T = \int_{0}^{1} \frac{1 - (1-s)^{n-1}}{s} \, ds. \quad \co 
\end{equation}
\noindent
The change of variables $t = 1-s$ and expanding the new integrand gives $T = H_{n-1}$. Thus, $\alpha_{n} - 
\alpha_{n-1} = H_{n-1}$.  A direct computation shows that the right-hand side of \eqref{lhs-eq1} satisfies the same 
difference equation and since the initial values match, the identity \eqref{lhs-eq1} holds for all $n \in \mathbb{N}$. 
\end{proof}

\begin{lemma}
\label{lemma-2}
For $n \in \mathbb{N}$ and $H_{n}$ the harmonic number, 
\begin{equation}
\label{lhs-eq2}
\sum_{k=2}^{n} \frac{(-1)^{k}}{k} \binom{n}{k} = n - H_{n}. \quad \co
\end{equation}
\end{lemma}
\begin{proof}
Let $b_{n}$ be the left-hand side of \eqref{lhs-eq2}. Then 
\begin{eqnarray}
b_{n} - b_{n-1} & = & \sum_{k=2}^{n} \frac{(-1)^{k}}{k} \binom{n}{k} -  \sum_{k=2}^{n-1} \frac{(-1)^{k}}{k} \binom{n-1}{k} 
\quad \co  \\
& = & \frac{(-1)^{n}}{n} + \sum_{k=2}^{n-1} \frac{(-1)^{k}}{k} \left[ \binom{n}{k} - \binom{n-1}{k} \right] \nonumber
\quad \co  \\ 
& = & \sum_{k=2}^{n} \frac{(-1)^{k}}{k} \binom{n-1}{k-1}.  \nonumber  \quad \co
\end{eqnarray}
\noindent
Now use $\begin{displaystyle} \frac{1}{k} \binom{n-1}{k-1}  = \frac{1}{n} \binom{n}{k} \end{displaystyle}$ to obtain 
$\begin{displaystyle}b_{n} - b_{n-1}  = \frac{1}{n} \sum_{k=2}^{n} (-1)^{k} \binom{n}{k} = 1 - \frac{1}{n}\end{displaystyle}$
using the fact that the alternating sum of binomial coefficients vanish. A direct computation shows that $n-H_{n}$ 
satisfies the same difference equation and has the same initial condition as $b_{n}$. This proves the identity.
\end{proof}

\begin{lemma}
\label{lemma-3}
For $n \in \mathbb{N}$ and with the notation \eqref{notation-b}, we have $B_{n,1} = -H_{n}$. \quad \co
\end{lemma}
\begin{proof}
This follows directly from 
\begin{equation}
B_{n,1} = \sum_{j=1}^{n} \frac{(-1)^{j}}{j} \binom{n}{j} = b_{n} - n \quad \co 
\end{equation}
\noindent
and the result of Lemma \ref{lemma-2}.
\end{proof}

\begin{lemma}
\label{lemma-4}
For $2 \leq k \leq n$ and with the notation \eqref{notation-b}, we have 
$B_{n,k} = - \frac{1}{k-1}.$
\end{lemma}
\begin{proof}
Observe that 
\begin{equation}
(k-1)B_{n,k} = \sum_{j=1}^{n} (-1)^{j} \binom{n}{j} \binom{k+j-2}{j}. \quad \co
\end{equation}
\noindent
Replace $k$ by $r + 2$ and observe that the result to be proven is equivalent to the identity
\begin{equation}
\label{final-1}
\sum_{j=0}^{n} (-1)^{j} \binom{n}{j} \binom{r+j}{j} = 0, \quad \textnormal{for} \,\, 0 \leq r \leq n-1 \, 
\textnormal{and} \,\, n \geq 1. \quad \co
\end{equation}
\noindent
Define 
\begin{equation}
f(x) = \sum_{j=0}^{n} \binom{n}{j} \binom{r+j}{j} x^{j}.
\end{equation}
\noindent
Then \eqref{final-1} is the same as  $f(-1)=0$. This is established next. The expression
\begin{equation}
f(x) = \pFq21{r+1 \quad -n}{1}{-x} \quad \co
\end{equation}
\noindent
is  checked directly by writing 
\begin{equation}
\pFq21{r+1 \quad -n}{1}{-x} = \sum_{\ell=0}^{\infty} \frac{(r+1)_{\ell} (-n)_{\ell}}{(1)_{\ell} \, \ell!} (-x)^{\ell}
\end{equation}
\noindent
and simplifying  the series using 
\begin{equation*}
(r+1)_{\ell} = \frac{(r + \ell)!}{r!}, \,\,  (1)_{\ell} = \ell! \quad \textnormal{and}  \,\,\, 
(-n)_{\ell} = \begin{cases} 0 & \text{if } \ell  >  n \\ (-1)^{\ell} \frac{n!}{(n-\ell)!} & \text{if } \ell \leq n. 
\end{cases}
\quad \co
\end{equation*}
\noindent
The value 
\begin{equation}
f(-1) = \pFq21{r+1 \quad -n}{1}{1}
\end{equation}
\noindent
is seen to vanish using Gauss' evaluation (see entry $9.122.1$ in \cite{gradshteyn-2015a} and also  Theorem 
$2.2.2$ in  \cite{andrews-1999a})
\begin{equation}
\pFq21{a \quad b}{c}{1} = \frac{\Gamma(c) \Gamma(c-a-b)}{\Gamma(c-a) \Gamma(c-b)}.
\quad \co
\end{equation}
\noindent
The proof of Lemma \ref{lemma-4} is complete.
\end{proof}

The formula \eqref{main-1} and the values of $B_{n,k}$ now produce 
\begin{eqnarray*}
\int_{0}^{\infty} \left[ \frac{L_{n}(x) - L_{n}(0)}{x} \right]^{2} \, e^{-x} \, dx & = & 
\sum_{k=1}^{n} \frac{(-1)^{k}}{k} \binom{n}{k} B_{n,k}  \quad \co \\
& = & 
-n B_{n,1} + \sum_{k=2}^{n} \frac{(-1)^{k}}{k} \binom{n}{k} B_{n,k} \nonumber  \quad \co  \\
& = & nH_{n} - \sum_{k=2}^{n} \frac{(-1)^{k}}{k(k-1)} \binom{n}{k} \nonumber  \quad \co\\
& = & nH_{n} - \left( \alpha_{n} - b_{n} \right) \nonumber \quad \co \\
& = & 2n -H_{n}, \nonumber \quad \co
\end{eqnarray*}
\noindent 
as claimed. 
\end{proof}

\begin{note}
The identity \eqref{final-1} may also be obtained from two representations of the Jacobi polynomial
\begin{eqnarray*}
P_{n}^{(\alpha, \beta)}(x) & = & \frac{(-1)^{n}}{2^{n} n!} (1-x)^{-\alpha} (1+x)^{-\beta} 
\frac{d^{n}}{dx^{n}} \left( (1-x)^{n+\alpha} (1+x)^{n+ \beta} \right) \\
& = & \sum_{j} \binom{n+\alpha+\beta+j}{j} \binom{n+\alpha}{n-j} \left( \frac{x-1}{2} \right)^{j}. \nonumber 
\end{eqnarray*}
\noindent
Choose $\alpha = 0, \, \beta = r-n$ and $x = -1$. Then \eqref{final-1} is recovered from $P_{n}^{(0, n-r)}(-1) = 0$.
\end{note}

\section{The Gegenbauer case}
\label{gegenb-int1}

The Gegenbauer polynomial $C_{n}^{(a)}(x)$ is defined by the explicit expression
\begin{equation}
C_{n}^{(a)}(x) = \sum_{k=0}^{\left\lfloor \tfrac{n}{2} \right\rfloor} (-1)^{k}  
\frac{\Gamma(n-k+a)}{\Gamma(a) k! (n-2k)!} (2x)^{n-2k}, \quad \co
\end{equation}
\noindent
or by the  recurrence 
\begin{equation}
C_{n}^{(a)}(x) = \frac{1}{n} 
\left[ 2x(n+a-1) C_{n-1}^{(a)}(x) - (n+2a-2) C_{n-2}^{(a)}(x) \right], \quad \co
\end{equation}
\noindent
with initial conditions $C_{0}^{(a)}(x) = 1$ and $C_{1}^{(a)}(x) = 2ax$. The Gegenbauer polynomials are orthogonal 
in $[-1, 1]$ with 
respect to the weight $(1-x^{2})^{a-1/2}$ and are normalized by 
\begin{equation}
\int_{-1}^{1} \left[ C_{n}^{a}(x) \right]^{2} (1-x^{2})^{a-1/2} \, dx = \frac{\pi \Gamma(n+2 a)}{2^{2a-1} n! (n+a) 
\Gamma^{2}(a) }. \quad \co
\end{equation}

The integral discussed in this section  is given by 
\begin{equation}
\gamma_{n}(a) = \int_{-1}^{1} \left[ \frac{C_{n}^{a}(x) - C_{n}^{a}(0)}{x} \right]^{2} (1-x^{2})^{a-1/2} \, dx.
\end{equation}
\noindent
Proceeding as in the previous sections one may obtain a recurrence for these integrals. 

\begin{lemma}
The integrals $\gamma_{n}(a)$ satisfy the recurrence
\begin{multline}
\label{recu-gamma}
\gamma_{n}(a) = \frac{\pi (n+a-1)\Gamma(n-1+2a)}{2^{2a-3} n \, n! \,  \Gamma^{2}(a)} + \frac{(n+2a-2)^{2}}{n^{2}} \gamma_{n-2}(a) \quad \co
\end{multline}
\noindent
with initial conditions 
\begin{equation}
\gamma_{0}(a) = 0 \quad  \text{and} \quad  \gamma_{1}(a) = \frac{4 a \sqrt{\pi}\,  \Gamma(a + \tfrac{1}{2} ) }{\Gamma(a)}.
\quad \co 
\end{equation}
\end{lemma}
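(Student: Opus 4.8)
The plan is to reproduce, for the Gegenbauer family, the template already used for the Legendre, Hermite and Chebyshev cases: turn the three-term recurrence into an identity among the shifted-and-divided polynomials, square it, integrate against the weight, and invoke orthogonality to discard the cross term. First I would rewrite the defining recurrence as
\[
n\,C_{n}^{(a)}(x) = 2x(n+a-1)C_{n-1}^{(a)}(x) - (n+2a-2)C_{n-2}^{(a)}(x),
\]
and evaluate it at $x=0$; since the middle term carries a factor of $x$, this produces the relation $n\,C_{n}^{(a)}(0) + (n+2a-2)C_{n-2}^{(a)}(0) = 0$. Subtracting $n\,C_{n}^{(a)}(0)$ from both sides and then adding and subtracting $(n+2a-2)C_{n-2}^{(a)}(0)$, the leftover constants assemble into exactly this vanishing combination, so that
\[
n\left[C_{n}^{(a)}(x)-C_{n}^{(a)}(0)\right] = 2x(n+a-1)C_{n-1}^{(a)}(x) - (n+2a-2)\left[C_{n-2}^{(a)}(x)-C_{n-2}^{(a)}(0)\right].
\]
Dividing by $x$ yields a polynomial identity relating $\tfrac{C_{n}^{(a)}(x)-C_{n}^{(a)}(0)}{x}$, $C_{n-1}^{(a)}(x)$, and $\tfrac{C_{n-2}^{(a)}(x)-C_{n-2}^{(a)}(0)}{x}$, the index dropping by two so that parities match.

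Next I would square this identity and integrate against $(1-x^{2})^{a-1/2}$ over $[-1,1]$. The cross term pairs $C_{n-1}^{(a)}(x)$ with the polynomial $\tfrac{C_{n-2}^{(a)}(x)-C_{n-2}^{(a)}(0)}{x}$ of degree $n-3$, and vanishes by orthogonality of $C_{n-1}^{(a)}$ to every polynomial of lower degree. The square of the first term contributes $4(n+a-1)^{2}$ times the normalization integral with $n$ replaced by $n-1$, while the square of the last term returns $(n+2a-2)^{2}\gamma_{n-2}(a)$. This gives
\[
n^{2}\gamma_{n}(a) = 4(n+a-1)^{2}\cdot\frac{\pi\,\Gamma(n-1+2a)}{2^{2a-1}(n-1)!\,(n-1+a)\,\Gamma^{2}(a)} + (n+2a-2)^{2}\gamma_{n-2}(a).
\]

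The remaining work is bookkeeping: cancel one factor of $(n+a-1)=(n-1+a)$, absorb $4/2^{2a-1}=2^{3-2a}$ into the prefactor, and divide by $n^{2}$ using $n^{2}(n-1)!=n\,n!$; these simplifications reproduce the claimed recurrence exactly. For the base cases, $C_{0}^{(a)}\equiv1$ makes the integrand identically zero, giving $\gamma_{0}(a)=0$, while $C_{1}^{(a)}(x)=2ax$ gives $\tfrac{C_{1}^{(a)}(x)-C_{1}^{(a)}(0)}{x}=2a$, so that $\gamma_{1}(a)=4a^{2}\int_{-1}^{1}(1-x^{2})^{a-1/2}\,dx$; evaluating this Beta integral as $\tfrac{\sqrt{\pi}\,\Gamma(a+\frac12)}{a\,\Gamma(a)}$ yields the stated value. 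I expect the only delicate points to be the cancellation of the $x=0$ constants, which depends on the step-two structure of the recurrence, together with the coefficient bookkeeping in the Gamma-function prefactor; the orthogonality step is identical to the earlier families and poses no obstacle.
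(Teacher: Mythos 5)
Your proposal is correct and is precisely the argument the paper intends: the paper gives no explicit proof of this lemma, saying only ``proceeding as in the previous sections,'' and your write-up carries out exactly that template (shift by the value at $0$ using the step-two cancellation, divide by $x$, square, integrate, kill the cross term by orthogonality against $C_{n-1}^{(a)}$, and apply the normalization integral), with all the constant bookkeeping checking out, including $4/2^{2a-1}=2^{3-2a}$, $n^{2}(n-1)!=n\,n!$, and the Beta-integral evaluation of $\gamma_{1}(a)$.
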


From the recurrence \eqref{recu-gamma}, the value $\gamma_{2m+1}(a)$ is easy to realize.  Let $n = 2m+1$ and 
write the recurrence as 
\begin{multline}
\label{recu-odd}
\gamma_{2m+1}(a) = \frac{\pi (2m+a)\Gamma(2m+2a)}{2^{2a-3} (2m+1) \, (2m+1)! \, 
 \Gamma^{2}(a)} + \frac{(2m+2a-1)^{2}}{(2m+1)^{2}} \gamma_{2m-1}(a). \quad \co
\end{multline}

\begin{proposition}
For $m \in \mathbb{N}$ odd, the integral $\gamma_{m}(a)$ is given by 
\begin{equation}
\gamma_{m}(a) = \frac{\pi  \Gamma(2a+m)}{2^{2a-2} m!  \, \Gamma^{2}(a)}. \quad \co
\end{equation}
\end{proposition}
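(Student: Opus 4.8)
The plan is to treat \eqref{recu-odd} as a first-order linear recurrence for the subsequence $g_m := \gamma_{2m+1}(a)$ and to establish the claimed closed form by induction on $m$. Because the recurrence expresses $g_m$ in terms of $g_{m-1}$ alone, its solution is uniquely pinned down by the single initial value $\gamma_1(a)$; hence it suffices to verify that the proposed expression $\gamma_{2m+1}(a) = \pi \Gamma(2a+2m+1)/[2^{2a-2}(2m+1)!\,\Gamma^2(a)]$ both matches the initial data and is reproduced by the recurrence.

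First I would dispose of the base case. Evaluating the proposed formula at $m=0$ gives $\pi \Gamma(2a+1)/[2^{2a-2}\Gamma^2(a)]$, and I would reconcile this with the stated initial value $\gamma_1(a) = 4a\sqrt{\pi}\,\Gamma(a+\tfrac12)/\Gamma(a)$ by invoking Legendre's duplication formula $\Gamma(2a) = 2^{2a-1}\pi^{-1/2}\Gamma(a)\Gamma(a+\tfrac12)$; after using $\Gamma(2a+1) = 2a\,\Gamma(2a)$ the two expressions coincide.

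For the inductive step I would substitute the closed form for $\gamma_{2m-1}(a)$ into the right-hand side of \eqref{recu-odd} and collect everything over the common factor $\pi \Gamma(2m+2a)/[2^{2a-2}(2m+1)!\,\Gamma^2(a)]$. Two elementary shifts drive the computation: the factor $(2m+2a-1)$ is absorbed into $\Gamma(2m+2a-1)$ to produce $\Gamma(2m+2a)$ through $z\Gamma(z)=\Gamma(z+1)$, while $(2m-1)!$ is promoted to $(2m+1)!$ via $(2m+1)! = (2m+1)(2m)(2m-1)!$. Once this is done the two summands merge into a single bracket whose numerator is $2(2m+a) + 2m(2m+2a-1)$; the decisive simplification is that this factors as $(m+a)(2m+1)$, so the bracket collapses to $2(m+a) = 2m+2a$. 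A final application of $(2m+2a)\Gamma(2m+2a) = \Gamma(2m+2a+1)$ then reproduces exactly the proposed formula at index $2m+1$, closing the induction.

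The argument is mechanical once the right common factor is isolated, so I do not anticipate a genuine obstacle. The only steps demanding care are the bracket simplification, where the cancellation to $(m+a)(2m+1)$ must be carried out correctly, and the consistent bookkeeping of the powers of two, since the inhomogeneous term in \eqref{recu-odd} carries $2^{2a-3}$ while the target formula carries $2^{2a-2}$, contributing a factor of $2$ that must be tracked through the combination.
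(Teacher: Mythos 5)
Your proposal is correct and follows essentially the same route as the paper: the paper normalizes $\gamma_{2m+1}(a)$ by the conjectured closed form to get $u_m(a)$ and shows inductively that $u_m(a)\equiv 1$, which is exactly your induction in disguise --- the key identity in both cases is the factorization $2(2m+a)+2m(2m+2a-1)=2(m+a)(2m+1)$. Your base-case verification via the duplication formula and the bracket simplification both check out.
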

\begin{proof}
Introduce the new variable $u_{m}$ by 
\begin{equation}
u_{m}(a) = \left(  \frac{\pi  \Gamma(2a+2m+1)}{2^{2a-2} (2m+1)!  \, \Gamma^{2}(a)} \right)^{-1} \gamma_{2m+1}(a),
\quad \co
\end{equation}
\noindent
and convert  \eqref{recu-odd}  into
\begin{equation}
(a+m) (2m+1)u_{m}(a) = 2m+a + m(2m+2a-1)u_{m-1}(a). \quad \co
\end{equation}
\noindent
An inductive argument now shows that $u_{m}(a) \equiv 1$, proving the assertion. 
\end{proof}

The next results present the evaluation of $\gamma_{n}(a)$ for $n$ even. The recurrence \eqref{recu-gamma} is now 
used to produce data, motivating  the scaling 
\begin{equation}
\R_{n}(a) = \frac{\Gamma(a) (2n)!}{4 \sqrt{\pi} \Gamma \left( a + \tfrac{1}{2} \right) \, (a)_{n}} \gamma_{2n}(a),
\quad \co
\end{equation}
\noindent
where $(a)_{n}$ is the Pochhammer symbol. Then \eqref{recu-gamma} yields:

\begin{proposition}
\label{recu-poly1}
The function $\R_{n}(a)$ satisfies the recurrence
\begin{equation*}
\R_{n}(a) = \frac{2}{n} (a+n-1)(2n-1) \R_{n-1}(a) + \frac{(2n+a-1)}{n} 2^{2n-2} \left( a+ \tfrac{1}{2} \right)_{n-1},
\quad \co
\end{equation*}
\noindent
with $\R_{0}(a)= 0$. Therefore  $\R_{n}(a)$ is a polynomial in $a$ of degree $n$.
\end{proposition}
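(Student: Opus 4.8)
The plan is to derive the recurrence for $\R_n(a)$ directly from the recurrence \eqref{recu-gamma} for the integrals $\gamma_n(a)$, specialized to the even index $2n$. Replacing $n$ by $2n$ in \eqref{recu-gamma} gives
\[
\gamma_{2n}(a) = \frac{\pi(2n+a-1)\Gamma(2a+2n-1)}{2^{2a-3}(2n)(2n)!\,\Gamma^2(a)} + \frac{(2n+2a-2)^2}{(2n)^2}\,\gamma_{2n-2}(a),
\]
while the defining scaling reads $\gamma_{2n}(a) = \dfrac{4\sqrt\pi\,\Gamma(a+\frac12)(a)_n}{\Gamma(a)(2n)!}\,\R_n(a)$, with the analogous expression for $\gamma_{2n-2}(a)$. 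Substituting both and dividing the relation through by $\dfrac{4\sqrt\pi\,\Gamma(a+\frac12)}{\Gamma(a)(2n)!}(a)_n$ isolates $\R_n(a)$ and reduces the claim to a purely algebraic identity in $a$ and $n$ among Gamma functions and Pochhammer symbols, to be matched term by term against the target recurrence.

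For the homogeneous (second) term the reduction is elementary: $(2n+2a-2)^2 = 4(a+n-1)^2$, $(2n)^2 = 4n^2$, and $(2n)!/(2n-2)! = (2n)(2n-1)$, while $(a)_{n-1}/(a)_n = 1/(a+n-1)$ removes one factor of $a+n-1$. Collecting these yields exactly the coefficient $\frac{2}{n}(a+n-1)(2n-1)$ of $\R_{n-1}(a)$. The inhomogeneous (first) term carries the real work: after clearing the common factor it equals $\dfrac{\sqrt\pi\,(2n+a-1)\Gamma(2a+2n-1)}{2^{2a}\,n\,\Gamma(a)\Gamma(a+\frac12)(a)_n}$, and I would rewrite $\Gamma(2a+2n-1)$ by the Legendre duplication formula $\Gamma(2z) = 2^{2z-1}\pi^{-1/2}\Gamma(z)\Gamma(z+\frac12)$ at $z = a+n-\frac12$, obtaining $\Gamma(2a+2n-1) = 2^{2a+2n-2}\pi^{-1/2}\Gamma(a+n-\frac12)\Gamma(a+n)$. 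The powers of two combine to $2^{2n-2}$, the relation $\Gamma(a+n) = \Gamma(a)(a)_n$ cancels the Pochhammer factor, and $\Gamma(a+n-\frac12)/\Gamma(a+\frac12) = (a+\frac12)_{n-1}$ collapses what remains, leaving precisely $\frac{(2n+a-1)}{n}2^{2n-2}(a+\frac12)_{n-1}$. This proves the recurrence, and $\R_0(a)=0$ comes from $\gamma_0(a)=0$.

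The polynomiality and degree claim then follow by induction. Since $\R_0(a)=0$ is a polynomial, the recurrence multiplies $\R_{n-1}(a)$ by a linear factor and adds the explicit degree-$n$ polynomial $\frac{(2n+a-1)}{n}2^{2n-2}(a+\frac12)_{n-1}$, so every $\R_n(a)$ is a polynomial in $a$. To fix the degree at exactly $n$ I would track the leading coefficient $c_n$, which satisfies $c_n = \frac{2(2n-1)}{n}c_{n-1} + \frac{2^{2n-2}}{n}$ with $c_0 = 0$; then $c_1 = 1$ and $c_n > 0$ for all $n\ge 1$, so $\deg\R_n = n$.

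The only obstacle is computational: keeping the Gamma-function and Pochhammer bookkeeping correct, and in particular applying the duplication formula at the precise argument $z = a+n-\frac12$ so that the powers of two and the Pochhammer symbols telescope. Once that step is arranged the remaining cancellations are routine, and nothing beyond the duplication formula and $(a)_n = \Gamma(a+n)/\Gamma(a)$ is required.
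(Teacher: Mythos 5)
Your proof is correct and is essentially the paper's own (implicit) argument: the paper obtains this proposition precisely by substituting the scaling relating $\R_{n}(a)$ to $\gamma_{2n}(a)$ into the recurrence \eqref{recu-gamma} at even index, which is exactly what you carry out, with the Legendre duplication formula at $z=a+n-\tfrac{1}{2}$ handling the Gamma-function reduction, and your algebra checks out. Your leading-coefficient induction (consistent with the later property that the leading coefficient is $2^{2n-1}-\tfrac{1}{2}\binom{2n}{n}$) correctly settles the degree claim, a detail the paper leaves unstated.
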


\begin{corollary}
\label{coeff-pos}
The coefficients of $\R_{n}(a)$ are positive. 
\end{corollary}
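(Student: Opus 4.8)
The plan is to prove the corollary by induction on $n$, exploiting the recurrence of Proposition \ref{recu-poly1}. The organizing principle is that the collection of polynomials in $a$ with nonnegative coefficients forms a cone closed under both addition and multiplication; thus it is enough to verify that each building block of the recurrence lies in this cone, and that at least one summand contributes strictly positive coefficients so that strictness is inherited. First I would dispose of the base case: evaluating the recurrence at $n=1$ (or using the explicit formula of the theorem) gives $\mathcal{R}_{1}(a) = a+1$, whose coefficients are positive, so the claim holds for $n=1$.

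Next I would examine the two summands of the recurrence for $n \geq 2$. The multiplier of $\mathcal{R}_{n-1}(a)$ is
\begin{equation*}
\frac{2}{n}(a+n-1)(2n-1) = \frac{2(2n-1)}{n}\,a + \frac{2(2n-1)(n-1)}{n},
\end{equation*}
a linear polynomial with positive coefficients. The inhomogeneous term factors as $\tfrac{2^{2n-2}}{n}(a+2n-1)\left(a+\tfrac12\right)_{n-1}$, and since
\begin{equation*}
\left(a+\tfrac12\right)_{n-1} = \prod_{j=0}^{n-2}\left(a + j + \tfrac12\right),
\end{equation*}
this is (up to the positive prefactor) a product of monic linear factors $a+c$ with $c>0$, together with the factor $a+2n-1$. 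The coefficients of such a product are elementary symmetric functions of positive numbers, hence strictly positive in every degree from $0$ to $n$.

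Then I would close the induction. Assuming $\mathcal{R}_{n-1}(a)$ has positive coefficients, the first summand is a product of two polynomials with positive coefficients and so has nonnegative coefficients, while the second summand has strictly positive coefficients in every degree up to $n$. Their sum therefore has strictly positive coefficients in every degree up to $n$, which simultaneously confirms that $\mathcal{R}_{n}(a)$ has degree exactly $n$ and that all of its coefficients are positive.

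The argument is essentially bookkeeping within the positive-coefficient cone; the only subtlety—and the point I would be most careful about—is establishing strict rather than merely weak positivity. This is exactly why I isolate the inhomogeneous term: it alone already contributes a strictly positive amount to each coefficient of degrees $0$ through $n$, so the nonnegative contribution of the homogeneous term cannot cancel positivity anywhere.
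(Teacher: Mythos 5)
Your proof is correct and follows exactly the route the paper intends: the corollary is stated there without proof as an immediate consequence of the recurrence in Proposition \ref{recu-poly1}, and your induction---a positive-coefficient linear multiplier times the inductive hypothesis, plus an inhomogeneous term that factors into linear terms $a+c$ with $c>0$---is precisely the argument left implicit. Your attention to strict rather than merely weak positivity, secured by the inhomogeneous term contributing to every degree from $0$ to $n$, fills in the one detail the paper glosses over.
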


A second recurrence for $\R_{n}(a)$ is obtained by eliminating the Pochhammer symbol in the result of 
Proposition \ref{recu-poly1}. 

\begin{proposition}
\label{recu-poly2}
The polynomials $\R_{n}(a)$ satisfy the recurrence 
\begin{eqnarray*}
n(2n+a-3)\R_{n}(a) & = & 2(2n+a-2)(4n^2+4an-8n-3a+3)\R_{n-1}(a)  \\
& & -4(a+n-2)(2n-3)(2a+2n-3)(2n+a-1)\R_{n-2}(a), \nonumber 
\end{eqnarray*}
\noindent
with initial conditions $\R_{0}(a) = 0$ and $\R_{1}(a) = a+1$.
\end{proposition}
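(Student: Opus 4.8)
The plan is to eliminate the Pochhammer term in the first-order recurrence of Proposition \ref{recu-poly1} by combining that recurrence at two consecutive indices. I would write the recurrence of Proposition \ref{recu-poly1} as
\begin{equation}
\R_n(a) = A_n \R_{n-1}(a) + B_n,
\end{equation}
where
\begin{equation}
A_n = \frac{2(a+n-1)(2n-1)}{n} \quad\text{and}\quad B_n = \frac{2n+a-1}{n}\,2^{2n-2}\left(a+\tfrac12\right)_{n-1}.
\end{equation}
The term $B_n$ is the only one carrying the Pochhammer factor, and the key observation is that the ratio $B_n/B_{n-1}$ is a \emph{rational} function of $n$ and $a$, because the shift $\left(a+\tfrac12\right)_{n-1}/\left(a+\tfrac12\right)_{n-2} = a+n-\tfrac32$ is rational.

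First I would compute this ratio. Using that shift one finds
\begin{equation}
\lambda_n := \frac{B_n}{B_{n-1}} = \frac{2(n-1)(2n+a-1)(2a+2n-3)}{n(2n+a-3)}.
\end{equation}
Writing $B_n = \R_n - A_n\R_{n-1}$ and $B_{n-1} = \R_{n-1} - A_{n-1}\R_{n-2}$, and substituting into the relation $B_n = \lambda_n B_{n-1}$, yields the homogeneous three-term recurrence
\begin{equation}
\R_n(a) = (A_n + \lambda_n)\R_{n-1}(a) - \lambda_n A_{n-1}\,\R_{n-2}(a).
\end{equation}
This already has the required second-order shape; what remains is to rewrite the two coefficients in the stated polynomial form.

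Next I would clear denominators by multiplying through by $n(2n+a-3)$, which is exactly the factor appearing on the left-hand side of the claimed recurrence (and which also explains its presence there). The coefficient of $\R_{n-2}$ becomes
\begin{equation}
n(2n+a-3)\,\lambda_n A_{n-1} = 4(a+n-2)(2n-3)(2a+2n-3)(2n+a-1),
\end{equation}
matching the target \emph{by construction}, since the factor $(n-1)$ in the denominator of $A_{n-1}=2(a+n-2)(2n-3)/(n-1)$ cancels the $(n-1)$ in $\lambda_n$. The coefficient of $\R_{n-1}$ becomes
\begin{equation}
2(a+n-1)(2n-1)(2n+a-3) + 2(n-1)(2n+a-1)(2a+2n-3),
\end{equation}
so the only substantive step left is the polynomial identity asserting that this quantity equals $2(2n+a-2)(4n^2+4an-8n-3a+3)$.

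The main obstacle, and really the only non-bookkeeping step, is verifying that last identity. Both sides are cubic in $n$ and quadratic in $a$, so it suffices to expand and compare the finitely many coefficients; I expect exact agreement, with the factor $(2n+a-2)$ emerging only after the two products on the left are added and collected. Finally, the initial conditions come for free: $\R_0(a)=0$ is inherited from Proposition \ref{recu-poly1}, and evaluating the first-order recurrence at $n=1$ gives $\R_1(a) = A_1\R_0(a) + B_1 = B_1 = (a+1)\,2^{0}\left(a+\tfrac12\right)_0 = a+1$, as required.
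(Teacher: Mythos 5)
Your proof is correct and follows essentially the same route as the paper, which obtains this recurrence precisely by ``eliminating the Pochhammer symbol in the result of Proposition \ref{recu-poly1}''; your computation of $\lambda_n$, the cancellation in the $\R_{n-2}$ coefficient, and the cubic identity giving the factor $(2n+a-2)$ all check out, as do the initial conditions. The paper only states this elimination as a one-line remark, so your write-up supplies the details it omits.
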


The next result gives a zero for the polynomial $\R_{n}(a)$. From the proof one obtains alternative 
expressions  for this polynomial. In turn, these produce a simpler proof of the theorem below. The 
original proof presented here also has pedagogical interest.

\begin{theorem}
\label{zero-proof1}
The polynomial $\R_{n}$ vanishes at $a = -n$.
\end{theorem}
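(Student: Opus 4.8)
The plan is to prove the vanishing directly from the degeneration of the Gegenbauer polynomials at $a=-n$, without invoking the closed form for $\R_n$, so that the argument stays self-contained within this section. The starting observation is an algebraic fact: at $a=-n$ the polynomial $C_{2n}^{(a)}(x)$ collapses to a constant. From the explicit expansion, the coefficient of $x^{2j}$ in $C_{2n}^{(a)}(x)$ for $1\le j\le n$ is a constant multiple of $(a)_{n+j}$, and $(a)_{n+j}=(-n)_{n+j}=0$ as soon as $n+j\ge n+1$, i.e.\ for every $j\ge1$. Hence $C_{2n}^{(-n)}(x)=C_{2n}^{(-n)}(0)$, and more precisely each such coefficient, being $(a)_{n+j}$ up to a constant, carries the factor $(a+n)$. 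Consequently $C_{2n}^{(a)}(x)-C_{2n}^{(a)}(0)=(a+n)\,R(a,x)$, where $R$ is a polynomial in $a$ and $x$ that is divisible by $x^{2}$. Squaring the integrand then produces a factor $(a+n)^{2}$ in $\gamma_{2n}(a)$.

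Next I would track this double zero through the normalization. For $a>-\tfrac12$ one has $\gamma_{2n}(a)=(a+n)^{2}h(a)$, where $h(a)=\int_{-1}^{1}\bigl(R(a,x)/x\bigr)^{2}(1-x^{2})^{a-1/2}\,dx$ is a finite combination, with polynomial-in-$a$ coefficients, of even moments $\int_{-1}^{1}x^{2\ell}(1-x^{2})^{a-1/2}\,dx=\tfrac{\Gamma(\ell+1/2)\Gamma(a+1/2)}{\Gamma(\ell+a+1)}$. These moments are regular at every integer value of $a$, since $\Gamma(a+\tfrac12)$ has poles only at half-integers and $1/\Gamma(\ell+a+1)$ is entire; hence $h(a)$ extends to a function finite at $a=-n$. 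In the scaling $\R_n(a)=\tfrac{\Gamma(a)(2n)!}{4\sqrt{\pi}\,\Gamma(a+1/2)(a)_n}\,\gamma_{2n}(a)$ that defines $\R_n$, the prefactor has a simple pole at $a=-n$ coming from $\Gamma(a)$, while $(a)_n=(-1)^{n}n!\neq0$ and $\Gamma(a+\tfrac12)$ is finite there. Multiplying this simple pole by the factor $(a+n)^{2}$ from $\gamma_{2n}$ leaves a single net factor $(a+n)$, so the right-hand side vanishes as $a\to-n$. Since $\R_n$ is a polynomial, hence continuous, this forces $\R_n(-n)=0$.

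The main obstacle is that the integral defining $\gamma_{2n}(a)$ converges only for $a>-\tfrac12$, so the value at $a=-n$ is not literally an integral and the ``double zero beats the simple pole'' heuristic must be justified by analytic continuation. I would handle this by observing that both sides of the scaling identity are meromorphic in $a$ and agree on $(-\tfrac12,\infty)$, hence agree identically; the order of vanishing is then read off from the explicit Beta-function form of the moments, whose only poles sit at half-integers and are therefore harmless at the integer point $a=-n$. This is the step where care is genuinely needed, since a naive induction on the recurrences of Propositions \ref{recu-poly1} and \ref{recu-poly2} does not close: evaluating those recurrences at the fixed point $a=-n$ produces $\R_{n-1}(-n)$ and $\R_{n-2}(-n)$ rather than the diagonal values $\R_{n-1}(-(n-1))$ and $\R_{n-2}(-(n-2))$, so the root does not propagate.

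Finally, I would remark that if one grants the closed form $\R_n(a)=2^{2n-1}\left(a+\tfrac12\right)_n-\binom{2n-1}{n-1}(a)_n$ stated in the introduction, the conclusion is immediate: at $a=-n$ one has $(a)_n=(-1)^{n}n!$ and $\left(a+\tfrac12\right)_n=(-1)^{n}(2n)!/(2^{2n}n!)$, and a one-line computation shows that both terms equal $(-1)^{n}(2n-1)!/(n-1)!$ and therefore cancel. This is the ``simpler proof'' alluded to in the text, while the degeneration argument above is the more structural route, which also explains \emph{why} $a=-n$ is a root and yields the factorization $C_{2n}^{(a)}(x)-C_{2n}^{(a)}(0)=(a+n)R(a,x)$ as the promised alternative expression.
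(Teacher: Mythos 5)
Your proposal is correct, but it follows a genuinely different route from the paper. The paper's proof is computational: it divides the recurrence of Proposition \ref{recu-poly1} by the product $\prod_{j}f_{j}(a)$ so that it telescopes, obtaining an explicit finite-sum representation of $\R_{n}(a)$, and then reduces $\R_{n}(-n)=0$ to the terminating binomial identity \eqref{sum-1}, which is certified by the Wilf--Zeilberger method via the companion function $G(n,k)$. Your argument is instead structural: you observe that every non-constant coefficient of $C_{2n}^{(a)}(x)$ is a constant multiple of $(a)_{n+j}$ and hence carries the factor $(a+n)$, so $C_{2n}^{(a)}(x)-C_{2n}^{(a)}(0)=(a+n)R(a,x)$ and $\gamma_{2n}(a)=(a+n)^{2}h(a)$, where $h$ is a polynomial combination of Beta moments regular at integer $a$; since the scaling prefactor $\Gamma(a)/\bigl(\Gamma(a+\tfrac12)(a)_{n}\bigr)$ contributes only a simple pole at $a=-n$ (as $(-n)_{n}=(-1)^{n}n!\neq 0$ and $\Gamma(\tfrac12-n)$ is finite), the double zero wins and the identity theorem for meromorphic functions transfers the conclusion to the polynomial $\R_{n}$. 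Your handling of the delicate point is sound: the integral only converges for $a>-\tfrac12$, and you correctly replace it by its Beta-function continuation before evaluating at $a=-n$; you are also right that a naive induction on the recurrences cannot close, since they produce off-diagonal values $\R_{n-1}(-n)$. What each approach buys: yours explains \emph{why} $a=-n$ is a root (degeneration of the Gegenbauer family at negative integer parameter) and avoids WZ machinery at the cost of an analytic-continuation argument; the paper's proof, while less conceptual, produces as a byproduct the summation formulas \eqref{kn-one} and \eqref{kn-two}, which are exactly what the paper later uses to arrive at the closed form \eqref{nice-xn}. One small caveat: your closing remark via the closed form is essentially the paper's Property \ref{zero-proof2} (and your evaluation of both Pochhammer terms as $(-1)^{n}(2n-1)!/(n-1)!$ is correct), but the factorization $C_{2n}^{(a)}(x)-C_{2n}^{(a)}(0)=(a+n)R(a,x)$ is not the ``alternative expression'' the paper promises --- that phrase refers to \eqref{kn-one} and \eqref{nice-xn}.
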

\begin{proof}
Introduce the function $f_{n}(a) = 2(a+n-1)(2n-1)/n$ and divide the recurrence for $\R_{n}$ by 
\begin{equation}
\label{prod-j}
\prod_{j=1}^{n} f_{j}(a) = \frac{(n+a-1)!}{(a-1)!} \binom{2n}{n} \quad \co
\end{equation}
\noindent
with $a! = \Gamma(a+1)$ for non-integer $a$.  This yields 
\begin{equation}
A_{n}(a) - A_{n-1}(a) = \frac{(2n+a-1)}{na}\frac{ \binom{2n+2a-2}{n+a-1} }{\binom{2n}{n} \binom{2a}{a}}, \quad \co
\end{equation}
\noindent
with $A_{n}(a)$ being the quotient of $\R_{n}(a)$ by the product in \eqref{prod-j}. Summing over $n$ gives 
\begin{equation}
\R_{n}(a) = \frac{(n+a-1)! \binom{2n}{n}}{a! \binom{2a}{a} }
\sum_{k=1}^{n} \frac{(2k+a-1)}{k \binom{2k}{k}} \binom{2k+2a-2}{k+a-1}. \quad \co
\end{equation}
\noindent
This can be written as 
\begin{equation}
\R_{n}(a) = (n-1)! \binom{a+n-1}{n-1} \binom{2n}{n} 
\sum_{k=1}^{n} \frac{(2k+a-1)a^{2}}{2k^{2}(2k-1)}  \frac{\binom{2a+2k-2}{2k-2}}{\binom{a-1+k}{k}^{2}} \quad \co
\end{equation}
\noindent
and using $\binom{-x+y}{y} = (-1)^{y} \binom{x-1}{y}$ gives
\begin{equation*}
\R_{n}(a) = \frac{(-1)^{n-1}}{2} (n-1)!  \binom{-a-1}{n-1} \binom{2n}{n} a^{2} 
\sum_{k=1}^{n} \frac{(2k+a-1)}{k^{2}(2k-1)} \frac{\binom{-2a-1}{2k-2}}{\binom{-a}{k}^{2}}. \quad \co
\end{equation*}
\noindent
Using this form of $\R_{n}(a)$ it follows that $\R_{n}(-n) = 0$ is equivalent to the identity
\begin{equation}
\label{sum-1}
\sum_{k=1}^{n} \frac{(2k-1-n)}{k^{2}(2k-1)} \frac{\binom{2n-1}{2k-2}}{\binom{n}{k}^{2}} = 0 \quad \co
\end{equation}
\noindent
whose  proof is based on the automatic methods developed by H.~Wilf and D.~Zeilberger \cite{petkovsek-1996a}.  To 
this end, denote the summand in \eqref{sum-1} by
\begin{equation}
F(n,k) =  \frac{(2k-1-n)}{k^{2}(2k-1)} \frac{\binom{2n-1}{2k-2}}{\binom{n}{k}^{2}}
\end{equation}
\noindent
and then using the  WZ-methodology gives the companion function 
\begin{equation}
G(n,k) = \frac{(k-n-1) \binom{2n-1}{2k-2}}{k^{2} \binom{n}{k}^{2}}
\end{equation}
\noindent
to the effect that 
\begin{equation}
F(n,k) = G(n,k+1)-G(n,k). \quad \co
\end{equation}
\noindent
Now sum from $k=1$ to $n$ and use the values $G(n,n+1) = G(n,1) = -1/n$ to obtain the result.
\end{proof}

\begin{note}
\label{note-expkn}
The alternative expression for $\R_{n}(a)$ given below follow from  elementary manipulations of those appearing in the previous proof. The 
formula is written in terms of factorials (with the usual interpretation  \newline $b! = \Gamma(b+1)$ for $b \not \in \mathbb{N}$). The formula is 
\begin{equation}
\label{kn-one}
\R_{n}(a) = \frac{1}{8} \binom{2n}{n} \frac{(a+n-1)!}{\left( a - \tfrac{1}{2} \right)!} 
\sum_{k=1}^{n} \frac{(2k+a-1)}{(2k-1)} \frac{1}{\binom{2k-2}{k-1}} 
\frac{\left( a + k - \tfrac{3}{2} \right)!}{(a+k-1)!} 2^{2k}. \quad \co
\end{equation}
\end{note}

A simpler expression for $\R_{n}(a)$ is given next. 

\begin{theorem}
For $n \geq 1$, the polynomial $X_{n}$ is given by 
\begin{eqnarray}
\R_{n}(a)  & = &   2^{2n-1} \left( a + \tfrac{1}{2} \right)_{n} - \binom{2n-1}{n-1} (a)_{n} \label{nice-xn} \\
& = & 2^{n-1} \prod_{k=0}^{n-1} (2a+2k+1) - \binom{2n-1}{n-1} \prod_{k=0}^{n-1} (a+k). \nonumber 
\end{eqnarray}
\end{theorem}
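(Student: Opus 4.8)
The plan is to prove \eqref{nice-xn} by verifying that the proposed closed form satisfies the first-order recurrence of Proposition \ref{recu-poly1} together with the matching initial value, and then invoking uniqueness. Write $Y_n(a) = 2^{2n-1}(a+\tfrac12)_n - \binom{2n-1}{n-1}(a)_n$ for the right-hand side of \eqref{nice-xn}. First I would record that $Y_1(a) = 2(a+\tfrac12) - a = a+1$, which agrees with the value $\R_1(a) = a+1$ already noted in Proposition \ref{recu-poly2}. Since the recurrence of Proposition \ref{recu-poly1} determines $\R_n(a)$ from $\R_{n-1}(a)$ for every $n \geq 1$, it suffices to check that $Y_n(a)$ obeys this recurrence for $n \geq 2$; induction anchored at $n=1$ then forces $Y_n = \R_n$ for all $n \geq 1$. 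One cannot anchor at $n=0$, since $Y_0 = -\tfrac12 \neq \R_0 = 0$, which is why \eqref{nice-xn} is asserted only for $n \geq 1$.

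The substitution splits naturally into the two Pochhammer families. Feeding $Y_{n-1}(a) = 2^{2n-3}(a+\tfrac12)_{n-1} - \binom{2n-3}{n-2}(a)_{n-1}$ into the right-hand side of the recurrence, I would collect the coefficient of $(a+\tfrac12)_{n-1}$, namely $\frac{2^{2n-2}}{n}\big[(a+n-1)(2n-1) + (2n+a-1)\big]$, and simplify the bracket to $n(2a+2n-1)$, giving $2^{2n-2}(2a+2n-1)$. The factor $2a+2n-1 = 2\big(a+\tfrac12+(n-1)\big)$ then promotes $(a+\tfrac12)_{n-1}$ to $(a+\tfrac12)_n$, producing exactly the term $2^{2n-1}(a+\tfrac12)_n$ of $Y_n$. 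For the $(a)_{n-1}$ family, the identity $(a)_{n-1}(a+n-1) = (a)_n$ leaves the coefficient $-\frac{2(2n-1)}{n}\binom{2n-3}{n-2}(a)_n$.

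It therefore remains to verify the purely numerical identity $\frac{2(2n-1)}{n}\binom{2n-3}{n-2} = \binom{2n-1}{n-1}$, which is an elementary factorial cancellation. This matches the $(a)_n$-coefficient of $Y_n$ and completes the verification of the recurrence, hence the theorem. The second displayed form in \eqref{nice-xn} is just the rewriting of the two Pochhammer symbols as products.

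I do not expect a genuine obstacle here: the argument is a direct verification rather than a discovery, and the only care needed is bookkeeping --- grouping the two Pochhammer families correctly, using $(a+\tfrac12)_{n-1}(2a+2n-1) = 2(a+\tfrac12)_n$ and $(a)_{n-1}(a+n-1) = (a)_n$, and remembering to anchor the induction at $n=1$ rather than $n=0$. An alternative route would be to sum the closed form in Note \ref{note-expkn} directly, for instance by Gosper's algorithm, but that is more laborious and less transparent than checking the recurrence, so I would keep the recurrence verification as the main line of proof.
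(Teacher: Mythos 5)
Your proposal is correct and follows exactly the paper's own route: the paper likewise proves \eqref{nice-xn} by verifying that the right-hand side satisfies the recurrence of Proposition \ref{recu-poly1} and agrees with $\R_1(a)=a+1$ at $n=1$. You simply carry out in detail the algebra (the collapse of the $(a+\tfrac12)_{n-1}$ coefficient to $2^{2n-1}(a+\tfrac12)_n$ and the binomial identity $\tfrac{2(2n-1)}{n}\binom{2n-3}{n-2}=\binom{2n-1}{n-1}$) that the paper leaves to the reader with the phrase ``simply verify,'' and your checks are accurate.
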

\begin{proof}
The formula \eqref{kn-one} is now written in term of Pochhammer symbols as 
\begin{equation}
\label{kn-two}
\R_{n}(a) = \frac{1}{8} \binom{2n}{n} \sum_{k=1}^{n} \frac{2^{2k}}{(2k-1) \binom{2k-2}{k-1}} 
\left[ (2k+a-1) \left(a + \tfrac{1}{2} \right)_{k-1} \left( a+k \right)_{n-k} \right].
\quad \co
\end{equation}
A symbolic evaluation of \eqref{kn-two}, using \texttt{Mathematica}, leads to \eqref{nice-xn}. To prove this assertion, simply verify 
that the right-hand side 
satisfies  \eqref{recu-poly1} for $n \geq 1$ and that both sides give $a+1$ at $n=1$.
\end{proof}

\medskip 

Some elementary properties of the polynomial $\R_{n}(a)$ can be obtained from \eqref{nice-xn}. The first class of results deal with 
the coefficients. 

\begin{property}
The leading coefficient of $\R_{n}(a)$ is $\begin{displaystyle}  2^{2n-1} - \frac{1}{2} \binom{2n}{n}.
\quad \co
\end{displaystyle}$
\end{property}

\begin{property}
\label{value-zero}
The constant term in $\R_{n}(a)$ is $\begin{displaystyle} \frac{(2n)!}{2n!}. \end{displaystyle}$
\quad \co
\end{property}

\begin{property}
The expansion
\begin{equation}
(x)_{n} = \sum_{j=0}^{n} (-1)^{n-j} s(n,j) x^{j},
\end{equation}
\noindent
where $s(n,j)$ are the Stirling numbers of the first kind is now replaced in \eqref{kn-two} to produce an expression for 
the coefficients in 
\begin{equation}
\R_{n}(a) = \sum_{r=0}^{n} \rho_{n,r} a^{r} 
\end{equation}
\noindent
in the form 
\begin{multline}
\label{form-rho}
\quad \co \\
\rho_{n,r} = 
(-1)^{n} 2^{n+1-r} \left[ \sum_{j=r}^{n} (-1)^{j}2^{n-j}  \binom{j}{r} s(n,j)  - 
(-1)^{r} 2^{n} \binom{2n-1}{n-1} s(n,r) \right]
\end{multline}
\noindent
Therefore the coefficients $\rho_{n,r}$ is an integer.
\end{property}

\begin{corollary}
The coefficients $\rho_{n,r}$ are positive integers.
\end{corollary}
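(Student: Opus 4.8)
The plan is to deduce the statement by combining two facts that are essentially already in hand: the integrality of each $\rho_{n,r}$, which is visible directly from the closed form \eqref{form-rho}, and the positivity of the coefficients of $\R_{n}(a)$, recorded in Corollary \ref{coeff-pos}. Since $\R_{n}(a)=\sum_{r=0}^{n}\rho_{n,r}a^{r}$ by definition, the numbers $\rho_{n,r}$ are \emph{exactly} the coefficients named in Corollary \ref{coeff-pos}, so positivity transfers immediately; what remains is to make the integrality transparent.

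For integrality I would read off \eqref{form-rho} termwise. The Stirling numbers $s(n,j)$ are integers, and each binomial coefficient $\binom{j}{r}$ and $\binom{2n-1}{n-1}$ is an integer. The only point requiring a glance is that every power of $2$ occurring in \eqref{form-rho} carries a nonnegative exponent: since $0\le r\le n$ one has $n+1-r\ge 1$, and since $r\le j\le n$ one has $n-j\ge 0$, while the remaining factor $2^{n}$ is plainly integral. Hence the bracketed sum is an integer and $\rho_{n,r}\in\mathbb{Z}$.

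For positivity I would invoke Corollary \ref{coeff-pos}, but it is worth recalling why that corollary holds, since positivity is not at all evident from \eqref{form-rho} itself — there the Stirling numbers alternate in sign, so the sum is alternating and no termwise bound is available. The clean route is the recurrence of Proposition \ref{recu-poly1},
\begin{equation*}
\R_{n}(a) = \frac{2}{n}(a+n-1)(2n-1)\,\R_{n-1}(a) + \frac{(2n+a-1)}{n}\,2^{2n-2}\left(a+\tfrac{1}{2}\right)_{n-1}.
\end{equation*}
For $n\ge 1$ the polynomial $(a+n-1)(2n-1)$ has nonnegative coefficients in $a$, the factor $(2n+a-1)2^{2n-2}\left(a+\tfrac{1}{2}\right)_{n-1}$ is a product of linear terms each with positive coefficients, and $\R_{0}(a)=0$ while $\R_{1}(a)=a+1$ (Proposition \ref{recu-poly2}) has positive coefficients. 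A straightforward induction on $n$ then forces every $\rho_{n,r}>0$.

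The only genuine obstacle — and it is a mild one — is precisely the mismatch between the two representations: the formula \eqref{form-rho} makes integrality obvious but hides positivity behind cancellation, whereas the recurrence makes positivity obvious but does not by itself pin down integrality. Assembling both viewpoints is exactly what yields ``positive integers,'' and I would present the argument in that order, closing with the remark that one could alternatively attempt to extract both properties from the product form \eqref{nice-xn}, at the cost of re-examining the cancellation between its two products.
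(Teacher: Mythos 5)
Your argument is correct and is essentially identical to the paper's own proof, which consists of the single line that the claim ``follows from \eqref{form-rho} and Corollary \ref{coeff-pos}'': integrality is read off from \eqref{form-rho}, positivity is imported from Corollary \ref{coeff-pos}. Your additional checks (nonnegativity of the exponents of $2$ in \eqref{form-rho}, and the induction via Proposition \ref{recu-poly1} behind Corollary \ref{coeff-pos}) merely make explicit what the paper leaves implicit.
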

\begin{proof}
This follows from \eqref{form-rho} and Corollary \ref{coeff-pos}.
\end{proof}

The second type of results relate to the values of $\R_{n}$. The first statement is another proof of Theorem \ref{zero-proof1}.

\begin{property}
\label{zero-proof2}
The polynomial $\R_{n}(a)$ vanishes at $a = -n$.
\end{property}
\begin{proof}
This follows  from the values 
\begin{equation*}
\left( - n + \tfrac{1}{2} \right)_{n} = (-1)^{n} \frac{(2n)!}{n! \, 2^{2n}} \quad \textnormal{and} \quad 
(-n)_{n} = (-1)^{n} n!,
\end{equation*}
\noindent
and \eqref{nice-xn}.
\end{proof}

\begin{property}
\label{pro-chris}
For $n \in \mathbb{N}$ and $1-n \leq k \leq -1$, we have
\begin{equation}
\R_{n}(-k) = (-1)^{k} 2^{2n-1} \left( \tfrac{1}{2} \right)_{k} \left( \tfrac{1}{2} \right)_{n-k}.
\end{equation}
\noindent
and they satisfy the symmetry condition 
\begin{equation}
\R_{n}(k-n) = (-1)^{n} \R_{n}(-k), \quad \text{for } 1 \leq k \leq n-1.
\end{equation}
\end{property}
\begin{proof}
This follows directly from \eqref{nice-xn} and the identities 
\begin{equation} 
\left( k-n+\tfrac{1}{2} \right)_{n} = (-1)^{n} \left( \tfrac{1}{2}-k \right)_{n} \quad \text{ and } 
\quad (k-n)_{n} = (-1)^{n} (-k)_{n}.
\end{equation}
\end{proof}

\begin{property}
All zeros of $\R_{n}(a)$ are real and negative.
\end{property}
\begin{proof}
The value $\R_{n}(0)$, given in Property \ref{value-zero}, is positive. On the other hand, Property \ref{pro-chris} shows that 
$\R_{n}(-1)<0$.  Therefore, there is a real zero of $\R_{n}(a)$ in the interval $(-1,0)$. 
Property \ref{pro-chris} shows that $\R_{n}(-1)<0$.  The signs of $\R_{n}(j)$ alternate for $j=-1, \, -2, \cdots, -n+1$ giving a 
zero in  the open  interval $(j-1,j)$ as shown before for $j=0$. This accounts for $n-1$ real negative zeros. Property 
\ref{zero-proof2} shows that there is one more zero exactly at 
$a=-n$, for a total of $n$.  Since $\R_{n}(a)$ is of degree $n$, these are all of them.
\end{proof}


The next statement looks at the polynomial $\R_{n}(a)$ modulo $n$, when $n$ is prime.

\begin{property}
For $q$ prime 
\begin{equation}
\R_{q}(a) \equiv a^{q} - a \bmod q.
\end{equation}
\end{property}
\begin{proof}
From $2^{q-1} \equiv 1 \bmod q$, we obtain 
\begin{equation}
2^{2q-1} \left( a + \tfrac{1}{2} \right)_{q}  \equiv  (2a+1)(2a+3) \cdots (2a+2q-1) \bmod q.
\end{equation}
\noindent
Wolstelholme theorem gives 
\begin{equation}
\binom{2q-1}{q-1} \equiv 1 \bmod q
\end{equation}
\noindent
so that 
\begin{equation}
\R_{q}(a) \equiv (2a+1)(2a+3) \cdots (2a+2q-1) - a(a+1) \cdots (a+q-1) \bmod q.
\end{equation}
\noindent
The lists  $\{ 1, \, 3, \, \cdots, 2q-1 \}$ and $\{ 0, \, 2, \,\cdots, 2q-2 \}$ are both the same list as 
$\{1, \cdots, q \}$ modulo $q$. Therefore 
\begin{eqnarray}
\R_{q}(a) & \equiv  & \prod_{j=1}^{q} (2a+2j-1) - \prod_{j=1}^{q}(a+j) \\
& \equiv &  \prod_{j=1}^{q} (2a+j) -  \prod_{j=1}^{q}(a+j) \nonumber \\
& \equiv &  \prod_{j=1}^{q} (2a+2j) -  \prod_{j=1}^{q}(a+j) \nonumber \\
& \equiv & (2^{q}-1) \prod_{j=1}^{q}(a+j) \nonumber \\
& \equiv & \prod_{j=1}^{q} (a+j),  \nonumber
\end{eqnarray}
\noindent
all congruences being taken modulo $q$. In the finite field $\mathbb{F}_{q}$, the polynomial $f(a) = a^q-a$ factors 
completely with roots $1, \, 2, \cdots, q$. Hence 
\begin{equation}
a^{q}-a \equiv \prod_{k=1}^{q} (a-k) \equiv \prod_{k=1}^{q} (a+k) \bmod q.
\end{equation}
\noindent 
This completes the proof.
\end{proof}

\noindent
We conclude the discussion on the polynomials $\R_{n}(a)$ with a question  based on extensive \texttt{Mathematica} computations: \\


\noindent
\texttt{Problem}. Define $Z_{n}(a) = \R_{n}(a)/(a+n)$. Then the family of polynomials $\{ Z_{n}(a) \}_{n \geq 1}$ have the interlacing property; that is, the
 roots of $Z_{n}$ interlace those of $Z_{n-1}$. \\
 

\noindent
\textbf{Acknowledgement}. The author wish to thank C. Koutschan for providing the result in Property \ref{pro-chris}.


\end{document}